\documentclass[11pt,reqno]{amsart}
\usepackage{enumerate}
\usepackage{amsrefs}
\usepackage{amsfonts, amsmath, amssymb, amscd, amsthm}
\usepackage{url}
\usepackage{graphicx}
\usepackage[%backref=page,
linktocpage=true,colorlinks,citecolor=magenta,linkcolor=blue,urlcolor=magenta]{hyperref}
\usepackage{multicol}
\usepackage{comment}
\usepackage{tikz}
\usepackage[margin=0.6in]{geometry}
\parskip = 0.13cm

\newtheorem{thm}{Theorem}[section]
\newtheorem*{thmA}{Theorem A}
\newtheorem*{thmB}{Theorem B}

\newtheorem{cor}[thm]{Corollary}

\newtheorem{lem}[thm]{Lemma}

\newtheorem{prop}[thm]{Proposition}
\theoremstyle{definition}

\newtheorem*{ack}{Acknowledgments}
\theoremstyle{remark}
\newtheorem{rem}[thm]{Remark}

\numberwithin{equation}{section}
\setcounter{tocdepth}{1}
\allowdisplaybreaks

%Symbols
\renewcommand{\(}{\left(}
\renewcommand{\)}{\right)}
\renewcommand{\~}{\tilde}
\renewcommand{\-}{\overline}

%Greek letters
\renewcommand{\a}{\alpha}
\renewcommand{\b}{\beta}

\renewcommand{\d}{\delta}
\newcommand{\e}{\varepsilon}
\renewcommand{\k}{\kappa}

\newcommand{\D}{\Delta}

\newcommand{\G}{\Gamma}

%Logical symbols

\newcommand{\ra}{\rightarrow}

%Fonts

%words

\newcommand{\Ric}{\operatorname{Ric}}

\newcommand{\tr}{\operatorname{tr}}

\newcommand{\divv}{\operatorname{div}}

\begin{document}
\title[Geometric inequalities for free boundary hypersurfaces]{Geometric inequalities for free boundary hypersurfaces in a ball}

\author{Yimin Chen}
\address{Department of Mathematical Sciences, Tsinghua University, Beijing 100084, P.R. China}
\email{\href{mailto:cym16@mails.tsinghua.edu.cn}{cym16@mails.tsinghua.edu.cn}}

\author{Yingxiang Hu}
\address{School of Mathematics, Beihang University, Beijing 100191, P.R. China}
\email{\href{mailto:huyingxiang@buaa.edu.cn}{huyingxiang@buaa.edu.cn}}

\author{Haizhong Li}
\address{Department of Mathematical Sciences, Tsinghua University, Beijing 100084, P.R. China}
\email{\href{mailto:lihz@tsinghua.edu.cn}{lihz@tsinghua.edu.cn}}

\keywords{free boundary hypersurfaces, sub-static, geometric inequalities, Reilly formula}
\subjclass[2010]{53C21; 53C24}

%%%%-------------------------------------------------------------------

\begin{abstract}
	In this paper, we prove a family of sharp geometric inequalities for free boundary hypersurfaces in a ball in space forms.
\end{abstract}

\maketitle
%\tableofcontents

\section{Introduction}
The sharp geometric inequalities for closed hypersurfaces in Riemannian manifolds have attracted a lot of attention during the past few decades. By means of curvature flows, the sharp geometric inequalities such as quermassintegral inequalities, Minkowski type inequalities, Alexandrov-Fenchel type inequalities and more general weighted inequalities have been established for closed hypersurfaces in space forms or warped product spaces, see  \cite{Guan-Li2009,Ge-Wang-Wu2014,Li-Wei-Xiong2014,Wang-Xia2014,BHW2016,Hu-Li2019,Hu-Li2021,Wei-Xiong2015,ACW2021,AHL2020,Hu-Li-Wei2020,Scheuer-Xia2019}, etc. Another powerful approach to prove the sharp geometric inequalities is the (weighted) Reilly formula, which can be used to prove Heintze-Karcher type inequality and Minkowski type inequalities for closed hypersurfaces in Riemannian manifolds, see e.g. \cite{Reilly1977,Ros1988,Qiu-Xia2014,Xia2016,LX19}. 

Influenced by the pioneering works due to De Lellis-M\"uller \cite{DeLellis-Muller2005} and De Lellis-Topping \cite{DeLellis-Topping2010}, Perez \cite[Thm. 3.1]{Perez2011} proved the following geometric inequality for closed hypersurfaces in $\mathbb R^{n+1}$. The equality case was characterized by Cheng and Zhou \cite[Thm. 1.3]{Cheng-Zhou2014}.
\begin{thmA}[\cite{Perez2011,Cheng-Zhou2014}] Let $n\geq 2$ and $\Sigma$ be a closed immersed oriented hypersurface in $\mathbb R^{n+1}$ with nonnegative Ricci curvature. Then
\begin{align}\label{ineq-Perez-1}
\int_{\Sigma} |H-\-H|^2   \leq \frac{n}{n-1} \int_{\Sigma} |\mathring{h}|^2 , 
\end{align}
where $\mathring{h}=h-\frac{H}{n}g$ and $\-H=\int_{\Sigma}H /|\Sigma|$. Equality holds in \eqref{ineq-Perez-1} if and only if it is a sphere.
\end{thmA}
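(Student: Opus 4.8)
The plan is to argue intrinsically on $\Sigma$ via the Bochner formula; since everything uses only the induced metric, the argument applies to immersed (not merely embedded) $\Sigma$. Two identities drive the proof. First, as $\mathring h$ is trace-free, $|\mathring h|^2=|h|^2-\tfrac1n H^2$. Second, the contracted Codazzi equation in $\R^{n+1}$ reads $\divv_\Sigma h=dH$, so that
\[
\divv_\Sigma \mathring h=\divv_\Sigma h-\tfrac1n\,dH=\tfrac{n-1}{n}\,dH ,
\]
which is the bridge linking $H$ to $\mathring h$.

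I would then introduce an auxiliary potential. Since $\Sigma$ is closed and $\int_\Sigma(H-\-H)=0$, the Poisson equation $\Delta_\Sigma f=H-\-H$ admits a smooth solution $f$, unique up to a constant. Integrating by parts twice and using the Codazzi identity,
\[
\int_\Sigma|H-\-H|^2=\int_\Sigma(H-\-H)\Delta_\Sigma f=-\int_\Sigma\langle\nabla H,\nabla f\rangle=\frac{n}{n-1}\int_\Sigma\langle\mathring h,\nabla^2 f\rangle .
\]
Because $\mathring h$ is trace-free, $\langle\mathring h,\nabla^2 f\rangle=\langle\mathring h,\mathring{\nabla^2 f}\rangle$, where $\mathring{\nabla^2 f}=\nabla^2 f-\tfrac1n(\Delta_\Sigma f)g$ is the trace-free Hessian.

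To close the estimate I would combine Cauchy-Schwarz with the Bochner formula. Integrating Bochner's identity over the closed $\Sigma$ gives $\int_\Sigma|\nabla^2 f|^2=\int_\Sigma(\Delta_\Sigma f)^2-\int_\Sigma\Ric_\Sigma(\nabla f,\nabla f)$, whence
\[
\int_\Sigma|\mathring{\nabla^2 f}|^2=\frac{n-1}{n}\int_\Sigma(\Delta_\Sigma f)^2-\int_\Sigma\Ric_\Sigma(\nabla f,\nabla f)\le\frac{n-1}{n}\int_\Sigma|H-\-H|^2,
\]
using $\Ric_\Sigma\ge0$ and $\Delta_\Sigma f=H-\-H$. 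Writing $A=\int_\Sigma|H-\-H|^2$ and $B=\int_\Sigma|\mathring h|^2$, the displayed identity, Cauchy-Schwarz and this bound yield $A\le\tfrac{n}{n-1}B^{1/2}(\tfrac{n-1}{n}A)^{1/2}=(\tfrac{n}{n-1})^{1/2}A^{1/2}B^{1/2}$; squaring and cancelling $A$ gives $A\le\tfrac{n}{n-1}B$, which is \eqref{ineq-Perez-1}.

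The main obstacle is rigidity. Equality forces simultaneously $\mathring h=c\,\mathring{\nabla^2 f}$ pointwise for a constant $c$ (equality in Cauchy-Schwarz) and $\Ric_\Sigma(\nabla f,\nabla f)\equiv0$ (equality from $\Ric_\Sigma\ge0$). If $\nabla f\equiv0$ then $H$ is constant, $A=0$, and equality forces $B=0$, so $\Sigma$ is totally umbilic, hence a round sphere for $n\ge2$. In the remaining case one must exploit $\mathring h=c\,\mathring{\nabla^2 f}$ together with the Gauss relation $\Ric_\Sigma=Hh-h^2$ and the vanishing $\Ric_\Sigma(\nabla f,\nabla f)=0$ to propagate umbilicity over all of $\Sigma$; the careful analysis at critical points of $f$ and along the zero set of $\mathring h$ is the delicate point, carried out by Cheng-Zhou \cite{Cheng-Zhou2014}, and yields that $\Sigma$ must be a sphere.
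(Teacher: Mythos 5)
Your proof is correct and follows essentially the same route as the paper's proof of its Theorem \ref{thm-I} (of which Theorem A is the closed, $k=1$, unweighted special case): an auxiliary Poisson problem $\D f = H-\overline{H}$, double integration by parts via the Codazzi identity $\divv\mathring h=\tfrac{n-1}{n}\,dH$ (i.e.\ the divergence structure of $\mathring T_1=-\mathring h$), Cauchy--Schwarz, and the integrated Bochner identity --- which is exactly the Reilly formula without boundary terms --- combined with $\operatorname{Ric}^{\Sigma}\ge 0$. Your deferral of the full equality analysis under merely nonnegative Ricci curvature to Cheng--Zhou is consistent with the paper's own attribution of that rigidity statement.
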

Inequality \eqref{ineq-Perez-1} is equivalent to
\begin{align}\label{ineq-Perez-2}
\int_{\Sigma} \left|h-\frac{\-H}{n}g \right|^2   \leq \frac{n}{n-1} \int_{\Sigma} |\mathring{h}|^2 .
\end{align}
It was pointed out by De Lellis and Topping \cite{DeLellis-Topping2010} that Perez's inequality \eqref{ineq-Perez-1} holds even for the closed hypersurfaces with nonnegative Ricci curvature in Einstein manifolds, see also \cite[Thm. 1.2]{Cheng-Zhou2014}. Recently, Perez's inequality \eqref{ineq-Perez-2} was generalized by Agostiniani, Fogagnolo and Mazzieri \cite[Thm. 6.1]{AFM2019} to the bounded outward minimising open domain with smooth boundary $\Sigma$ in $\mathbb R^3$ by nonlinear potential theory.  

Kwong \cite[Thms. 2.2 \& 4.1]{Kwong2015} generalized Perez's inequality \eqref{ineq-Perez-1} to the higher order mean curvatures for closed immersed submanifolds in space forms. Let $\mathbb M^{n+1}(K)$ be the simply connected space form with constant sectional curvature $K\in \{-1,0,1\}$. Then $\mathbb M^{n+1}(0)=\mathbb R^{n+1}$, $\mathbb M^{n+1}(1)=\mathbb S^{n+1}$ and $\mathbb M^{n+1}(-1)=\mathbb H^{n+1}$. Let $H_k$ be the $k$th mean curvature and $T_{k}$ the $k$th Newton tensor respectively, see \eqref{dfn-mean-curvature} and \eqref{dfn-Newton-tensor} in \S \ref{sec:2}. We denote by $\mathring{T}_k=T_k-\frac{1}{n}(\tr T_k)g$ the tracefree part of $T_k$, in particular, $\mathring{T}_1=-\mathring{h}$. For closed immersed hypersurfaces with nonnegative Ricci curvature in space forms, Kwong's results can be stated as follows.
\begin{thmB}[\cite{Kwong2015}]
	Let $n\geq 2$ and $\Sigma$ be a closed immersed oriented hypersurface in $\mathbb M^{n+1}(K)$ with nonnegative Ricci curvature. Then for $k\in \{1,\cdots,n-1\}$, we have 
\begin{align}\label{ineq-Kwong}
\int_{\Sigma} (H_k-\-H_k)^2  \leq \frac{n(n-1)}{(n-k)^2} \int_{\Sigma}  |\mathring{T}_k|^2 ,
\end{align} 
where $\-H_k=\int_{\Sigma} H_k /|\Sigma|$. Equality is attained at geodesic spheres. Conversely, if $\Sigma$ has positive Ricci curvature, then equality in \eqref{ineq-Kwong} implies that $\Sigma$ is a geodesic sphere.
\end{thmB}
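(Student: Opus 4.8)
The plan is to adapt the De Lellis--Topping argument behind Theorem A to the higher Newton tensors, the whole proof being driven by the fact that in a space form each Newton tensor $T_k$ is divergence free. First I record the two structural identities I will need: $\divv T_k=0$ (a consequence of the Codazzi equation in $\mathbb M^{n+1}(K)$) and $\tr T_k=(n-k)H_k$ (from the definitions \eqref{dfn-mean-curvature} and \eqref{dfn-Newton-tensor}). Writing $\mathring T_k=T_k-\tfrac1n(\tr T_k)g$, these combine to give the key relation
\begin{align*}
\divv(\mathring T_k)=-\tfrac{n-k}{n}\,\nabla H_k .
\end{align*}

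Second, since $\int_\Sigma (H_k-\-H_k)=0$, on the closed manifold $\Sigma$ I can solve the Poisson equation $\Delta f=H_k-\-H_k$. Multiplying by $H_k-\-H_k$, integrating, and using the relation above together with two integrations by parts (and the symmetry of $\mathring T_k$) yields
\begin{align*}
\int_\Sigma (H_k-\-H_k)^2=-\tfrac{n}{n-k}\int_\Sigma \langle \mathring T_k,\nabla^2 f\rangle =-\tfrac{n}{n-k}\int_\Sigma \Big\langle \mathring T_k,\ \nabla^2 f-\tfrac{\Delta f}{n}g\Big\rangle,
\end{align*}
the last step using that $\mathring T_k$ is trace free. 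Cauchy--Schwarz then bounds the right-hand side by $\tfrac{n}{n-k}\big(\int_\Sigma|\mathring T_k|^2\big)^{1/2}\big(\int_\Sigma|\nabla^2 f-\tfrac{\Delta f}{n}g|^2\big)^{1/2}$.

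Third, I estimate the trace-free Hessian term with the Bochner formula. Integrating $\tfrac12\Delta|\nabla f|^2=|\nabla^2 f|^2+\langle\nabla f,\nabla\Delta f\rangle+\Ric(\nabla f,\nabla f)$ over $\Sigma$ and using $\Ric\ge0$ gives $\int_\Sigma|\nabla^2 f|^2\le\int_\Sigma(\Delta f)^2$, hence
\begin{align*}
\int_\Sigma\Big|\nabla^2 f-\tfrac{\Delta f}{n}g\Big|^2=\int_\Sigma|\nabla^2 f|^2-\tfrac1n\int_\Sigma(\Delta f)^2\le\tfrac{n-1}{n}\int_\Sigma(H_k-\-H_k)^2 .
\end{align*}
Feeding this back into the Cauchy--Schwarz bound and cancelling one power of $\int_\Sigma(H_k-\-H_k)^2$ produces exactly the constant $\tfrac{n(n-1)}{(n-k)^2}$, which is \eqref{ineq-Kwong}.

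Finally, for the rigidity: on a geodesic sphere all principal curvatures coincide, so $\mathring T_k\equiv0$ and $H_k$ is constant, whence both sides of \eqref{ineq-Kwong} vanish. Conversely, equality forces equality in the Bochner step, i.e. $\int_\Sigma\Ric(\nabla f,\nabla f)=0$; when $\Ric>0$ this means $\nabla f\equiv0$, so $H_k\equiv\-H_k$ and the inequality collapses to $\int_\Sigma|\mathring T_k|^2=0$, that is $\mathring T_k\equiv0$. The remaining and genuinely delicate point is the algebraic implication that $\mathring T_k\equiv0$ together with the positivity of the Ricci curvature forces all principal curvatures to coincide: for $k=1$ this is immediate since $\mathring T_1=-\mathring h$, but for general $k$ one must show that the eigenvalues of $T_k$ being equal, in the curvature regime cut out by $\Ric>0$ via the Gauss equation, implies $\kappa_1=\cdots=\kappa_n$; total umbilicity then gives a geodesic sphere. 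I expect this algebraic rigidity lemma to be the main obstacle, whereas the integral part of the argument is a direct Newton-tensor analogue of the $k=1$ case.
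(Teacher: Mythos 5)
Your integral argument is correct and is exactly the strategy this paper uses for its free--boundary analogue (Theorem \ref{thm-I}): your Poisson equation is the $V=1$, boundaryless case of the Neumann problem \eqref{Neumann-eq}, your two integrations by parts against $\divv(\mathring{T}_k)=-\frac{n-k}{n}\nabla H_k$ reproduce \eqref{s3:key-step}, and your Bochner step is precisely what the weighted Reilly formula \eqref{s4:Li-Xia-formula} degenerates to when $V=1$ and $\partial\Sigma=\emptyset$. The constant comes out right, and the reduction of the equality case to $\nabla f\equiv 0$, hence $H_k\equiv\-H_k$ and $\mathring{T}_k\equiv 0$, is also fine (modulo the trivial remark that if $\int_\Sigma(H_k-\-H_k)^2=0$ one cannot ``cancel a power'' but the inequality is then automatic).

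The step you flag as the main obstacle is a genuine gap in your write-up, but it is closed by Lemma \ref{s2:lem-Tk-umbilicity} of the paper, and the mechanism is purely algebraic rather than delicate: if $\mathring{T}_k=0$ then $(T_k)^i_j=\frac{n-k}{n}H_k\,\d^i_j$, and contracting with $h^j_i$ (using $(T_k)^i_jh^j_i=(k+1)H_{k+1}$) gives $(k+1)H_{k+1}=\frac{n-k}{n}H_1H_k$, i.e.\ equality in the Newton--MacLaurin inequality \eqref{s2:Newton-MacLaurin-ineq}, which forces $\k=cI$ \emph{provided} $\k\in\G_k^{+}$ at the point in question. So the real remaining issue is to locate points where $\k\in\G_k^{+}$, since $\Ric>0$ alone does not imply this (a totally geodesic equator in $\mathbb S^{n+1}$ has $\Ric>0$ and $h=0$). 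The resolution is the one used in the proof of Theorem \ref{thm-I}: a closed hypersurface in a space form always has an elliptic point $p$ (touch it from outside by a geodesic sphere at a farthest point), where $\k\in\G_n^{+}\subset\G_k^{+}$; there Lemma \ref{s2:lem-Tk-umbilicity} gives $\k=cI$ with $c=(\-H_k/\binom{n}{k})^{1/k}$ a fixed constant strictly above the ellipticity threshold, since $H_k\equiv\-H_k$ is already known to be constant. Hence the set of elliptic points is both open and closed (its closure still satisfies $h_{ij}=cg_{ij}$ with the same $c$), so by connectedness $\Sigma$ is totally umbilic with $h=cg$, a geodesic sphere. With that open--closed argument supplied, your proof is complete and coincides in substance with the paper's treatment of the free-boundary case.
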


Due to Fraser-Schoen's important work on the first Steklov eigenvalue and minimal free boundary surfaces \cite{Fraser-Schoen2011,Fraser-Schoen2016}, it is natural to establish the sharp geometric inequalities for free boundary hypersurfaces in a ball. In \cite{WX19}, Wang and Xia utilized the (weighted) Reilly formula to prove the Heintze-Karcher type inequality for such hypersurfaces. More recently, by introducing a specifically designed flow, Scheuer, Wang and Xia \cite{Scheuer-Wang-Xia2018} proved a family of Alexandrov-Fenchel type inequalities for free boundary hypersurfaces in a unit Euclidean ball. See also \cite{Wang-Xia2019,Wang-Weng2020}. 

We use $x:M^n \ra B \subset \mathbb M^{n+1}(K)$ to denote an isometric immersion of an oriented $n$-dimensional compact manifold with boundary $\partial M$ into a (closed) geodesic ball $B$ in $\mathbb M^{n+1}(K)$ such that 
$$
x(\operatorname{int} M)\subset \operatorname{int} B \quad \text{and}\quad  x(\partial M) \subset \partial B.
$$ 
The hypersurface $\Sigma=x(M)$ is called {\em free boundary} if it meets the geodesic sphere $\partial B$ orthogonally. It is convenient to use the following models to represent the space forms $\mathbb M^{n+1}(K)$.
\begin{enumerate}[(i)]
	\item If $K=0$, then $(\mathbb M^{n+1}(0),\-g)=(\mathbb R^{n+1},\d)$ and $B=B_R$ is a Euclidean ball of radius $R$, where $\d$ is the Euclidean metric.
	\item If $K=-1$, then we use the Poincar\'e ball model $(\mathbb B^{n+1},e^{2u}\d)$ to represent the hyperbolic space $\mathbb H^{n+1}$, where $\mathbb B^{n+1}$ is the open unit ball centered at the origin in $\mathbb R^{n+1}$ and $e^{2u}=\frac{4}{(1-|x|^2)^2}$. Abuse of notation, here we also use $x$ to denote the position vector with respect to the center of $\mathbb B^{n+1}$ and $|x|=\d(x,x)^{1/2}$ is its Euclidean norm. Let $B=B_R^{\mathbb H}$ be a geodesic ball in $(\mathbb B^{n+1},e^{2u}\d)$ with radius $R\in(0,\infty)$ centered at the center of $\mathbb B^{n+1}$.
	\item If $K=1$, then we use the model $(\mathbb R^{n+1},e^{2u}\d)$ to represent the unit sphere without the south pole $\mathbb S^{n+1}\backslash \{\mathcal{S}\}$ and $e^{2u}=\frac{4}{(1+|x|^2)^2}$. Let $B=B_R^{\mathbb S}$ be a geodesic ball in $\mathbb S^{n+1}$ with radius $R\in (0,\pi)$ centered at the north pole.
\end{enumerate}
For simplicity, we call a point $p\in \Sigma$ is an {\em elliptic point}, if its second fundamental form satisfies $h_{ij}>cg_{ij}$ at this point, where $c=0$ when $K=0,1$ and $c=1$ when $K=-1$.

Our first result of this paper is the following.
\begin{thm}\label{thm-I}
	Let $n\geq 2$ and $\Sigma$ be a compact immersed oriented hypersurface with free boundary in a ball in space forms. Assume that $\Sigma$ has nonnegative Ricci curvature. Then for $k\in \{1,\cdots,n-1\}$, we have 
	\begin{align}\label{s1:1.2}
	\int_{\Sigma} |H_k-\overline{H}_k |^2 \leq \frac{n(n-1)}{(n-k)^2}\int_{\Sigma} |\mathring{T}_k|^2.
	\end{align}
	Equality is attained at spherical caps or a flat disk. Conversely, if there exists an elliptic point in the interior of $\Sigma$, then equality in \eqref{s1:1.2} implies that $\Sigma$ is a spherical cap.
\end{thm}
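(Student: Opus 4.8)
The plan is to adapt the integral argument behind Theorem B to the free boundary setting, where the new content is a careful choice of boundary condition together with the control of the two boundary integrals that arise. First I would solve the Neumann problem
\begin{equation*}
\Delta f = H_k-\overline{H}_k \quad\text{in }\Sigma,\qquad \partial_\nu f = 0 \quad\text{on }\partial\Sigma,
\end{equation*}
where $\nu$ denotes the outward unit conormal of $\partial\Sigma$ in $\Sigma$; this is solvable precisely because $\int_\Sigma(H_k-\overline{H}_k)=0$. Since the Newton tensor is divergence free in a space form, $\divv T_k=0$, integration by parts gives $\int_\Sigma\langle T_k,\nabla^2 f\rangle=\int_{\partial\Sigma}T_k(\nu,\nabla f)$. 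The decisive geometric input is that the free boundary condition makes $\nu$ a principal direction of $\Sigma$ along $\partial\Sigma$: because $\Sigma$ meets the totally umbilic geodesic sphere $\partial B$ orthogonally, $\nu$ agrees with the outer normal $\overline{N}$ of $\partial B$, and differentiating $\langle N,\overline{N}\rangle=0$ in directions tangent to $\partial\Sigma$ yields $h(\nu,X)=0$ for all $X\in T\partial\Sigma$. Hence $T_k(\nu,X)=0$, and since $\partial_\nu f=0$ forces $\nabla f$ to be tangent to $\partial\Sigma$ there, the boundary integral vanishes. Writing $\mathring{\nabla^2 f}:=\nabla^2 f-\tfrac1n(\Delta f)g$, decomposing $\langle T_k,\nabla^2 f\rangle=\tfrac{n-k}{n}H_k\Delta f+\langle\mathring{T}_k,\mathring{\nabla^2 f}\rangle$ (using $\tr T_k=(n-k)H_k$) and $\int_\Sigma H_k\Delta f=\int_\Sigma(H_k-\overline{H}_k)^2$, I obtain
\begin{equation*}
\frac{n-k}{n}\int_\Sigma(H_k-\overline{H}_k)^2=-\int_\Sigma\langle\mathring{T}_k,\mathring{\nabla^2 f}\rangle .
\end{equation*}

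Next I would bound $\int_\Sigma|\mathring{\nabla^2 f}|^2$ through the Bochner formula. Integrating $\tfrac12\Delta|\nabla f|^2=|\nabla^2 f|^2+\langle\nabla f,\nabla\Delta f\rangle+\Ric(\nabla f,\nabla f)$ over $\Sigma$ and using $\partial_\nu f=0$, the boundary contribution reduces to $-\lambda\int_{\partial\Sigma}|\nabla^{\partial\Sigma}f|^2$, where $\lambda$ is the principal curvature of $\partial B$; this is again a consequence of $\partial B$ being totally umbilic, which makes $\partial\Sigma$ umbilic in $\Sigma$ with this same $\lambda$. Since $\Ric\ge0$ by hypothesis and the geodesic sphere $\partial B$ is convex, so that $\lambda\ge0$, I get $\int_\Sigma|\nabla^2 f|^2\le\int_\Sigma(\Delta f)^2$, and subtracting the trace part via $|\nabla^2 f|^2=|\mathring{\nabla^2 f}|^2+\tfrac1n(\Delta f)^2$ gives $\int_\Sigma|\mathring{\nabla^2 f}|^2\le\tfrac{n-1}{n}\int_\Sigma(H_k-\overline{H}_k)^2$. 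Feeding this and the Cauchy--Schwarz inequality into the identity above produces \eqref{s1:1.2}. I expect the sign of this boundary term to be the delicate analytic point: convexity of $\partial B$ holds automatically when $K\in\{-1,0\}$ and when $K=1$ with $R\le\pi/2$, whereas a large spherical ball gives $\lambda<0$ and would need a separate treatment.

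For the equality discussion the forward direction is immediate, since spherical caps and the flat disk are totally umbilic, so $\mathring{T}_k\equiv0$ and $H_k\equiv\overline{H}_k$ and both sides of \eqref{s1:1.2} vanish. The converse is the main obstacle. If $H_k\equiv\overline{H}_k$ then equality in \eqref{s1:1.2} forces $\int_\Sigma|\mathring{T}_k|^2=0$ directly; otherwise, equality makes every intermediate estimate sharp, so $\Ric(\nabla f,\nabla f)\equiv0$, the boundary term vanishes, and Cauchy--Schwarz gives $\mathring{\nabla^2 f}=a\,\mathring{T}_k$ for a nonzero constant $a$. At an interior elliptic point $p$ the Gauss equation shows that the normalization $c=0$ for $K\in\{0,1\}$ and $c=1$ for $K=-1$ is exactly what forces $\Ric>0$ there; hence $\nabla f$ vanishes on a neighborhood $U$ of $p$ on which $\Ric$ stays positive definite, so $\mathring{\nabla^2 f}=0$ and therefore $\mathring{T}_k=0$ on $U$. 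An algebraic property of Newton tensors in the G\aa rding cone shows that $\mathring{T}_k=0$ at an elliptic point forces $\Sigma$ to be umbilic there, so $\Sigma$ is a piece of a geodesic sphere on $U$. The remaining, and most technical, step is to propagate this local umbilicity globally: using that the equality configuration makes the immersion real-analytic, a connectedness/unique-continuation argument upgrades umbilicity on $U$ to umbilicity on all of $\Sigma$, and the free boundary condition then identifies $\Sigma$ as a spherical cap.
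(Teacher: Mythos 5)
Your derivation of the inequality itself is essentially the paper's argument: the Neumann problem $\D f=H_k-\overline{H}_k$, $f_\mu=0$, the vanishing of the two boundary integrals via $\divv T_k=0$ and the principal-direction property $T_k(\mu,Z)=0$, and the second-derivative estimate are exactly what the paper does; the only cosmetic difference is that the paper packages your Bochner computation as the Li--Xia weighted Reilly formula applied on $\Omega=\Sigma$ with $V\equiv 1$. Your worry about the sign of the boundary term is well placed: that term is $\int_{\partial\Sigma}h^{\partial\Sigma}(\tilde\nabla f,\tilde\nabla f)$ with $h^{\partial\Sigma}=h^{\partial B}|_{T\partial\Sigma\otimes T\partial\Sigma}$, which for $K=1$ equals $\cot R$ times the metric and is negative when $R>\pi/2$; the paper simply asserts $h^{\partial\Sigma}\geq 0$ without comment, so your caveat flags a restriction that is implicit there rather than an error on your side.

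The genuine gap is in the rigidity step. You correctly extract from equality that $\Ric(\nabla f,\nabla f)\equiv 0$ and that $\mathring{T}_k$ is a constant multiple of the traceless Hessian, and that near an elliptic point $p$ one gets $\nabla f=0$, hence $\mathring{T}_k=0$ and umbilicity on a neighborhood $U$. But your proposed globalization --- that the equality configuration makes the immersion real-analytic so that unique continuation applies --- is unsupported: $\Sigma$ is only assumed smooth, and none of the equality identities is an elliptic system for the immersion, so there is no reason for analyticity. The paper closes this step with an open--closed argument whose key point your proposal misses: on $U$ one has $f\equiv\mathrm{const}$, hence $H_k\equiv\overline{H}_k$ by the Neumann equation, so the umbilicity factor on $U$ is the \emph{global} constant $\bigl(\overline{H}_k/\tbinom{n}{k}\bigr)^{1/k}$, which exceeds $c$ because $\overline{H}_k=H_k(p)>H_k(cI)$. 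Therefore $h\geq (c+\e)g$ on $\overline{U}$ with $\e>0$ independent of the point, which shows that the set $\Sigma_+$ of interior elliptic points is closed as well as open; by connectedness $\Sigma_+=\operatorname{int}\Sigma$, and $\Sigma$ is totally umbilical with that fixed factor, hence a spherical cap. Without this uniform lower bound you cannot rule out the umbilicity factor degenerating to $c$ at the edge of the umbilic region, beyond which $\Ric$ may have a kernel and the argument stalls.
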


If we further assume that $x:(M^n,g)\ra B\subset \mathbb R^{n+1}$ is an isometric embedding, then the free boundary hypersurface $\Sigma$ is called {\em convex} if its second fundamental form with respect to the unit outward normal $\nu$ is positive semi-definite, i.e., $h_{ij}\geq 0$. We obtain the following result for convex free boundary hypersurfaces in a Euclidean ball.
\begin{cor}\label{cor-convex-hypersurface}
	Let $n\geq 2$ and $\Sigma$ be a convex free boundary hypersurface in a ball $B \subset \mathbb R^{n+1}$. Then for  $k\in \{1,\cdots,n-1\}$, we have 
	\begin{align}\label{s1:1.3}
	\int_{\Sigma} |H_k-\overline{H}_k |^2 \leq \frac{n(n-1)}{(n-k)^2}\int_{\Sigma} |\mathring{T}_k|^2.
	\end{align}
	Equality holds in \eqref{s1:1.3} if and only if $\Sigma$ is a spherical cap or a flat disk.	
\end{cor}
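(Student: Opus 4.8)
The inequality \eqref{s1:1.3} will follow directly from Theorem \ref{thm-I}, so the only thing to check is that a convex hypersurface in $\R^{n+1}$ has nonnegative Ricci curvature. Since here $K=0$, the Gauss equation expresses the intrinsic curvature of $\Sigma$ entirely through the second fundamental form: in an orthonormal frame diagonalizing the shape operator, with principal curvatures $\kappa_1,\dots,\kappa_n$, one has $\Ric(e_i,e_i)=\kappa_i(H-\kappa_i)=\kappa_i\sum_{j\neq i}\kappa_j$, with vanishing off-diagonal entries. Convexity means $h_{ij}\ge 0$, i.e.\ $\kappa_i\ge 0$ for all $i$, so every eigenvalue of $\Ric$ is nonnegative. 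Thus $\Sigma$ satisfies the hypotheses of Theorem \ref{thm-I}, which gives \eqref{s1:1.3} for all $k\in\{1,\dots,n-1\}$. The \emph{if} part of the equality statement is equally immediate: a spherical cap or a flat disk is totally umbilic, so $\mathring{T}_k\equiv 0$ and $H_k\equiv\overline{H}_k$, whence both sides of \eqref{s1:1.3} vanish.

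For the \emph{only if} direction, assume equality holds and split according to whether $\Sigma$ has an interior elliptic point. If it does, the converse assertion of Theorem \ref{thm-I} applies verbatim and forces $\Sigma$ to be a spherical cap. The remaining task---and the real content of the corollary beyond Theorem \ref{thm-I}---is the case in which $\Sigma$ has \emph{no} interior elliptic point. Here convexity is decisive: at each interior point all $\kappa_i\ge 0$ while not all are strictly positive, so the smallest principal curvature vanishes, the Gauss--Kronecker curvature satisfies $H_n\equiv 0$, and $\Ric$ is everywhere degenerate. The plan is to return to the equality conditions isolated in the proof of Theorem \ref{thm-I}. Tracking equality through the Cauchy--Schwarz and Bochner/Reilly steps there, one obtains that $\mathring{T}_k$ is a constant multiple $\lambda\,\mathring{\nabla^2 f}$ of the trace-free Hessian of the auxiliary potential $f$, together with the vanishing of the associated nonnegative curvature term along $\nabla f$ (which for $k=1$ is simply $\Ric(\nabla f,\nabla f)=0$). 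When $\lambda=0$ we are done quickly: then $\mathring{T}_k\equiv 0$, so $\Sigma$ is umbilic; but an umbilic hypersurface whose smallest principal curvature vanishes has $h\equiv 0$, hence is a flat disk.

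The substantive obstacle is to exclude the case $\lambda\neq 0$, i.e.\ a non-totally-geodesic convex hypersurface with $H_n\equiv 0$ satisfying the equality relations. To handle it I would exploit the developable structure forced by $H_n\equiv 0$: on the open subset where the nullity index is locally constant, the relative nullity distribution of $h$ (the $\kappa=0$ directions) is integrable with leaves that are open pieces of affine subspaces of $\R^{n+1}$, so $\Sigma$ is ruled by straight segments. Since $\Sigma$ is compact with $\partial\Sigma\subset\partial B$ and meets $\partial B$ orthogonally, each ruling must run from boundary to boundary; inserting this, together with the free boundary (Neumann) condition satisfied by $f$ and the vanishing of the boundary integral---which the orthogonality with the totally umbilic sphere $\partial B$ is exactly designed to guarantee---into the proportionality $\mathring{T}_k=\lambda\,\mathring{\nabla^2 f}$, I expect to be forced back to $\lambda=0$ and hence to the umbilic case already treated. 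This ruling-and-boundary analysis, reconciling the global convex geometry with the pointwise equality tensor, is the step I anticipate to be the hardest.
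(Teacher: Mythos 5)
Your derivation of the inequality (convexity plus the Gauss equation gives $\Ric^{\Sigma}\geq 0$, then apply Theorem \ref{thm-I}) and of the ``if'' direction of the equality case matches the paper. The gap is in the ``only if'' direction, in exactly the case you flag as hardest: when $\Sigma$ has no interior elliptic point you only outline a plan (relative nullity foliation, rulings hitting $\partial B$, forcing $\lambda=0$) and do not carry it out, so the proof is incomplete precisely where the corollary goes beyond Theorem \ref{thm-I}. The paper disposes of this case with a single citation: by \cite[Lem.~3.1]{Lambert-Scheuer2017}, a convex free boundary hypersurface in a ball which is \emph{not} a flat disk necessarily has a strictly convex interior point. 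This is a statement about convex free boundary hypersurfaces alone --- it does not use the equality conditions at all --- so the dichotomy becomes: either $\Sigma$ is a flat disk, or it has an interior elliptic point and the rigidity part of Theorem \ref{thm-I} applies verbatim. Your route, which tries to extract the flat disk conclusion from the equality identities combined with $H_n\equiv 0$, is both harder than necessary and unfinished.

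There is a second, more local problem in your $\lambda=0$ subcase. You assert that $\mathring{T}_k\equiv 0$ forces $\Sigma$ to be umbilic, but Lemma \ref{s2:lem-Tk-umbilicity} requires $\kappa\in\Gamma_k^{+}$ for $k\geq 2$, and this hypothesis fails exactly in the degenerate situation you are analyzing. For instance, at a point with $\kappa=(1,0,\dots,0)$ one has $T_2=0$, hence $\mathring{T}_2=0$, without the point being umbilic. So even the ``easy'' branch of your case analysis does not close for $k\geq 2$; the Lambert--Scheuer lemma (or an equivalent argument producing an elliptic point) is genuinely needed.
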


In \cite{Scheuer-Wang-Xia2018}, Scheuer, Wang and Xia found a natural counterpart of quermassintegrals $W_k$ for convex free boundary hypersurfaces $\Sigma$ in the unit Euclidean ball. Let $\partial \Sigma$ be the boundary of $\Sigma$ in $\mathbb S^n$. We use $|\partial \Sigma|$ to denote the $(n-1)$-dimensional Hausdorff measure of $\partial \Sigma$, and $|\Sigma|$ to denote the $n$-dimensional Hausdorff measure of $\Sigma$. As an application of Corollary \ref{cor-convex-hypersurface}, we obtain the following geometric inequalities.
\begin{cor}\label{cor-low-dim}
	\begin{enumerate}[(i)]
		\item Let $\Sigma^2$ be a convex free boundary surface in $\-{\mathbb B}^{3}$. Then 
	\begin{align}\label{1.5}
	 \frac{1}{4}\frac{(\int_{\Sigma}H)^2}{|\Sigma|}+|\partial \Sigma| \geq 2\pi.
	\end{align} 
Equality holds in \eqref{1.5} if and only if it is a flat disk or a spherical cap.
	    \item Let $\Sigma^3$ be a convex free boundary hypersurface in $\-{\mathbb B}^{4}$. Then 
	    \begin{align}\label{1.7}
	    \frac{1}{12}\(\frac{1}{3}\frac{(\int_{\Sigma}H)^2}{|\Sigma|}+|\partial \Sigma|\) \geq f_3\circ f_1^{-1}\(\frac{1}{4}|\Sigma|\),
	    \end{align} 
	    where $f_k(r)=W_k(\hat{C}_r)$ is a strictly increasing function of $r$ (see \S \ref{sec:2} for details), and $f_l^{-1}$ is the inverse of $f_l$. Equality holds in \eqref{1.7} if and only if it is a flat disk or a spherical cap. 	
\end{enumerate}   
\end{cor}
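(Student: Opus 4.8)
The plan is to feed the case $k=1$ of Corollary \ref{cor-convex-hypersurface} into two different "top-order" geometric identities: the free boundary Gauss--Bonnet formula for part (i), and the Alexandrov--Fenchel inequality of Scheuer--Wang--Xia \cite{Scheuer-Wang-Xia2018} for part (ii). First I would isolate the single curvature inequality that drives both parts. Taking $k=1$ in \eqref{s1:1.3}, the constant is $\frac{n(n-1)}{(n-1)^2}=\frac{n}{n-1}$, we have $H_1=H$ and $|\mathring T_1|^2=|\mathring h|^2=|h|^2-H^2/n$, and $\int_\Sigma|H-\overline{H}|^2=\int_\Sigma H^2-(\int_\Sigma H)^2/|\Sigma|$. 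Invoking the Gauss equation $H^2-|h|^2=2H_2$ for the Euclidean ball and collecting the $\int_\Sigma H^2$ terms, \eqref{s1:1.3} reduces to
\begin{align}\label{plan-key}
\frac{2n}{n-1}\int_\Sigma H_2 \ \leq\ \frac{(\int_\Sigma H)^2}{|\Sigma|}.
\end{align}
This is the only place where the convexity hypothesis of Corollary \ref{cor-convex-hypersurface} is used.

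For part (i), I specialize \eqref{plan-key} to $n=2$, where $H_2$ is the Gauss curvature $K$ of $\Sigma$, obtaining $\int_\Sigma K\leq\frac14(\int_\Sigma H)^2/|\Sigma|$. The second ingredient is a free boundary Gauss--Bonnet identity: along $\partial\Sigma$ the free boundary condition in the unit ball forces the outward conormal of $\Sigma$ to be the position vector $x$, and differentiating $\langle\gamma',x\rangle=0$ along the boundary curve shows that the geodesic curvature $\kappa_g$ of $\partial\Sigma$ in $\Sigma$ is identically $1$. Since a convex free boundary surface is a disk, Gauss--Bonnet gives $\int_\Sigma K+|\partial\Sigma|=2\pi\chi(\Sigma)=2\pi$. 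Substituting the bound for $\int_\Sigma K$ yields \eqref{1.5} immediately, and equality forces equality in Corollary \ref{cor-convex-hypersurface}, hence a spherical cap or a flat disk.

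For part (ii), I specialize \eqref{plan-key} to $n=3$, giving $\int_\Sigma H_2\leq\frac13(\int_\Sigma H)^2/|\Sigma|$. From \S\ref{sec:2} I would recall that the Scheuer--Wang--Xia quermassintegrals of a convex free boundary hypersurface $\Sigma^3\subset\overline{\mathbb{B}}^4$ are $W_1(\Sigma)=\frac14|\Sigma|$ and $W_3(\Sigma)=\frac{1}{12}(\int_\Sigma H_2+|\partial\Sigma|)$, and that their Alexandrov--Fenchel inequality reads $W_3(\Sigma)\geq f_3\circ f_1^{-1}(W_1(\Sigma))$, with equality only at spherical caps or the flat disk. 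Then I simply chain
\begin{align}
\frac{1}{12}\Big(\tfrac{1}{3}\,\tfrac{(\int_\Sigma H)^2}{|\Sigma|}+|\partial\Sigma|\Big)\ \geq\ \frac{1}{12}\Big(\int_\Sigma H_2+|\partial\Sigma|\Big)\ =\ W_3(\Sigma)\ \geq\ f_3\circ f_1^{-1}\Big(\tfrac{1}{4}\,|\Sigma|\Big),
\end{align}
where the first inequality is \eqref{plan-key} and the last is Alexandrov--Fenchel; this is exactly \eqref{1.7}. Equality now requires equality in \emph{both} the curvature step and the Alexandrov--Fenchel step, and each holds only at spherical caps or the flat disk, so the two equality sets are compatible and give the stated rigidity.

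The algebra leading to \eqref{plan-key} is routine. The genuine obstacle is normalization bookkeeping: one must confirm that the Newton tensor convention underlying Corollary \ref{cor-convex-hypersurface} is precisely the one with $H_1=H$ and $|\mathring T_1|^2=|\mathring h|^2$ (so that no spurious factor of $n$ survives in \eqref{plan-key}), and, more substantially, that the free boundary quermassintegral $W_3$ carries exactly the boundary term $\frac{1}{12}|\partial\Sigma|$ with exactly the bulk coefficient $\frac{1}{12}\int_\Sigma H_2$. This last point is the $n=3$ analogue of the identity $\kappa_g\equiv1$ used in part (i); it is what makes the curvature inequality \eqref{plan-key} line up cleanly with the geometric quantities in \eqref{1.5} and \eqref{1.7}, and once it is in place (together with the Alexandrov--Fenchel inequality in this normalized form) the corollary follows by the chaining above.
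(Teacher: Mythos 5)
Your proposal is correct and follows essentially the same route as the paper: reduce the $k=1$ case of Corollary \ref{cor-convex-hypersurface} to $\int_{\Sigma}H_2\leq \frac{n-1}{2n}(\int_{\Sigma}H)^2/|\Sigma|$ and substitute it into the Scheuer--Wang--Xia identities/inequalities \eqref{s2:n=2-case} and \eqref{s2:n=3-case}. Your direct Gauss--Bonnet derivation in part (i) (with $\kappa_g\equiv 1$ and $\chi(\Sigma)=1$) is just an unpacking of the quoted identity \eqref{quermassintegral-equality} for $n=2$, so the arguments coincide; the only small caveat is that convexity is also used there (for the disk topology and for the Alexandrov--Fenchel step in part (ii)), not solely in the curvature estimate.
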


\begin{rem}
	\begin{enumerate}
    \item By H\"older inequality, inequality \eqref{1.5} implies the Willmore type inequality
	\begin{align*}
	\frac{1}{4}\int_{\Sigma}H^2+|\partial \Sigma| \geq 2\pi,
	\end{align*} 
    which was proved by Volkmann \cite[Cor. 5.8]{Volkmann2016}.
    \item By H\"older inequality, inequality \eqref{1.7} implies the Willmore type inequality 
    \begin{align*}
    \frac{1}{12}\(\frac{1}{3}\int_{\Sigma}H^2+|\partial \Sigma|\) \geq f_3\circ f_1^{-1}\(\frac{1}{4}|\Sigma|\),
    \end{align*}
    which was proved by Scheuer, Wang and Xia \cite[Cor. 1.4]{Scheuer-Wang-Xia2018}. 
\end{enumerate}
\end{rem} 

To further generalize the inequality \eqref{s1:1.2}, we consider a Riemannian triple $(\Sigma,g,V)$ which constitutes an $n$-dimensional smooth connected manifold $\Sigma$, a Riemannian metric $g$ and a smooth nonzero function $V$ on $\Sigma$. $V$ is called a {\em potential function}. A Riemannian triple $(\Sigma,g,V)$ is called {\em sub-static} if 
	\begin{align}\label{s1:dfn-sub-static}
	\D V g -\nabla^2 V+V \operatorname{Ric}^{\Sigma} \geq 0, \quad \text{on $\Sigma$},
	\end{align}
	where $\D$, $\nabla^2$ and $\operatorname{Ric}^{\Sigma}$ are the Laplacian, Hessian and Ricci curvature of $(\Sigma,g)$, respectively. We also call it {\em strictly sub-static} if the strict inequality holds in \eqref{s1:dfn-sub-static} on $\Sigma$. The potential functions associated to free boundary hypersurfaces in a ball in space forms are the functions $V_a$ which are defined as follows. 
\begin{equation*}
V_a=\left\{\begin{array}{ll}
\langle x,a\rangle,&\qquad \text{if $K=0$};\\
\frac{2\langle x,a\rangle}{1-|x|^2},&\qquad \text{if $K=-1$};\\
\frac{2\langle x,a\rangle}{1+|x|^2},&\qquad \text{if $K=1$},
\end{array}
\right.
\end{equation*}
where $a \in \mathbb R^{n+1}$ is a constant vector and $\langle x,a \rangle=\d(x,a)$. For simplicity, we use $B_{a+}=\{V_a>0\}\cap B$ to denote the half ball of $B$ in the direction of $a$. We obtain the following weighted geometric inequalities.
\begin{thm}\label{thm-II}
	Let $n\geq 2$ and $x:M \ra \mathbb M^{n+1}(K)$ be a compact immersed oriented hypersurface $\Sigma$ with free boundary in a ball $B$. Assume $\Sigma$ lies in $B_{a+}$ and $(\Sigma,g,V_a)$ is sub-static. Then for $k\in \{1,\cdots,n-1\}$, we have 
	\begin{align}\label{s1:1.1}
	\int_{\Sigma}  V_a|H_k-\overline{H}_k^{V_a}|^2 \leq \frac{n(n-1)}{(n-k)^2}\int_{\Sigma} V_a |\mathring{T}_k|^2,
	\end{align}
	where $\overline{H}_k^{V_a}=\int_{\Sigma}V_a H_k/\int_{\Sigma}V_a$. Equality is attained at spherical caps. Conversely, if $(\Sigma,g,V_a)$ is strictly sub-static and there exists an elliptic point in the interior of $\Sigma$, then equality in \eqref{s1:1.1} implies that $\Sigma$ is a spherical cap. 
\end{thm}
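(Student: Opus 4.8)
The plan is to combine three ingredients: the divergence-free property $\divv T_k=0$ of the Newton tensors in a space form, an auxiliary weighted Neumann problem adapted to the potential $V_a$, and the weighted Reilly formula together with the sub-static condition \eqref{s1:dfn-sub-static}, the free boundary condition being used to discard all boundary contributions. Throughout write $\tau_k=\tr T_k=(n-k)H_k$, so that $\tau_k-\overline{\tau}_k^{V_a}=(n-k)(H_k-\overline{H}_k^{V_a})$ and \eqref{s1:1.1} is equivalent to $\int_\Sigma V_a(\tau_k-\overline{\tau}_k^{V_a})^2\le n(n-1)\int_\Sigma V_a|\mathring T_k|^2$. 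Since $\Sigma\subset B_{a+}$ we have $V_a>0$ on $\Sigma$, so $V_a$ is an admissible positive weight, and I would first solve the weighted Neumann problem
\begin{equation*}
\divv(V_a\nabla f)=V_a(\tau_k-\overline{\tau}_k^{V_a})\ \text{ in }\Sigma,\qquad \partial_\mu f=0\ \text{ on }\partial\Sigma ,
\end{equation*}
whose solvability is guaranteed by the compatibility condition $\int_\Sigma V_a(\tau_k-\overline{\tau}_k^{V_a})=0$, which holds by the very definition of the weighted average $\overline{H}_k^{V_a}$.

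Next I would test the equation against $T_k$. Applying the divergence theorem to the vector field $V_aT_k(\nabla f)$ and using $\divv T_k=0$ gives
\begin{equation*}
\int_{\partial\Sigma}V_aT_k(\nabla f,\mu)=\int_\Sigma\big[T_k(\nabla V_a,\nabla f)+V_a\langle T_k,\nabla^2 f\rangle\big].
\end{equation*}
Splitting $T_k=\mathring T_k+\tfrac{\tau_k}{n}g$, the pure-trace pieces assemble into $\tfrac1n\int_\Sigma\tau_k\,\divv(V_a\nabla f)=\tfrac1n\int_\Sigma V_a(\tau_k-\overline{\tau}_k^{V_a})^2$, while the traceless pieces combine into $\langle\mathring T_k,P\rangle=\langle\mathring T_k,\mathring P\rangle$, where $P=V_a\nabla^2 f+\nabla V_a\otimes\nabla f$ (symmetrised) has trace $\tr P=\divv(V_a\nabla f)$. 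This yields the identity
\begin{equation*}
\int_\Sigma V_a(\tau_k-\overline{\tau}_k^{V_a})^2=-\,n\int_\Sigma\langle\mathring T_k,\mathring P\rangle+n\int_{\partial\Sigma}V_aT_k(\nabla f,\mu).
\end{equation*}
The free boundary condition forces $\mu$ to be a principal direction of $\Sigma$ along $\partial\Sigma$, hence $T_k(\mu,X)=0$ for every $X$ tangent to $\partial\Sigma$; together with $\partial_\mu f=0$ this makes the boundary integral vanish. A weighted Cauchy--Schwarz inequality then reduces the theorem to the estimate
\begin{equation*}
\int_\Sigma V_a^{-1}|\mathring P|^2\le\frac{n-1}{n}\int_\Sigma V_a(\tau_k-\overline{\tau}_k^{V_a})^2,\qquad\text{equivalently}\qquad \int_\Sigma V_a^{-1}|P|^2\le\int_\Sigma V_a^{-1}(\tr P)^2 .
\end{equation*}

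The heart of the matter is this last inequality, and here I expect the main difficulty. It should follow from the weighted Reilly formula (cf. \cite{Qiu-Xia2014,LX19}) applied on $(\Sigma,g)$ with potential $V_a$: that formula expresses $\int_\Sigma V_a\big((\Delta f)^2-|\nabla^2 f|^2\big)$ in terms of $\int_\Sigma(\Delta V_a\,g-\nabla^2 V_a+V_a\Ric^\Sigma)(\nabla f,\nabla f)$ plus boundary integrals, and the sub-static condition \eqref{s1:dfn-sub-static} renders the bulk curvature term nonnegative. To convert this into the $V_a^{-1}$-weighted statement above I would exploit the Hessian identity for the potential, which on a hypersurface in a space form takes the form $\nabla^2 V_a=\sigma_a\,h-K\,V_a\,g$ for a support function $\sigma_a$; this ties the lower-order terms generated by $P$ to the geometry and is what allows the sharp constant $\tfrac{n-1}{n}$ to survive. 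The residual boundary integrals from the Reilly formula are handled exactly as above, using $\partial_\mu f=0$, the fact that $\mu$ is principal, and the boundary behaviour of $V_a$ (its conormal derivative on $\partial\Sigma$ being pinned down by orthogonality with $\partial B$). Establishing the precise boundary identities and the correct matching of the weights $V_a$ and $V_a^{-1}$ is the step I expect to be most delicate.

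Finally, for the rigidity I would track the equalities. Spherical caps are totally umbilic, so there $\mathring T_k=0$ and $H_k$ is constant, whence both sides of \eqref{s1:1.1} vanish and equality holds. Conversely, assume $(\Sigma,g,V_a)$ is strictly sub-static and equality holds. Then the curvature term in the Reilly formula must vanish identically, and strict sub-staticity forces $\nabla f\equiv0$; feeding this back into the Neumann problem gives $V_a(\tau_k-\overline{\tau}_k^{V_a})\equiv0$, so $H_k$ is constant on $\Sigma$. A free boundary hypersurface with constant $H_k$ possessing an interior elliptic point must be a spherical cap---the elliptic point supplies the ellipticity needed to run the rigidity argument, exactly as in the converse of Theorem \ref{thm-I}---which completes the proof.
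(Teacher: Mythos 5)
Your overall skeleton (auxiliary Neumann problem, $\divv T_k=0$, Cauchy--Schwarz, then a Reilly-type trace inequality) is the same as the paper's, but the central step of your argument has a genuine gap, and it stems from choosing the wrong auxiliary operator. You solve $\divv(V_a\nabla f)=V_a(\tau_k-\overline{\tau}_k^{V_a})$ with $\partial_\mu f=0$ and are then forced to control the tensor $P=\operatorname{Sym}\nabla(V_a\nabla f)$ via the inequality $\int_\Sigma V_a^{-1}|P|^2\le\int_\Sigma V_a^{-1}(\tr P)^2$, which you do not prove and only assert ``should follow'' from the weighted Reilly formula. It does not: the Li--Xia formula \eqref{s4:Li-Xia-formula} is built around the tensor $A_{ij}=f_{ij}-\frac{V_{ij}}{V}f$ and produces as its bulk curvature term exactly $(\Delta V\,g-\nabla^2V+V\operatorname{Ric}^{\Sigma})(\nabla f-\frac{\nabla V}{V}f,\cdot)$, which is where sub-staticity enters. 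Your tensor $P=V_a\nabla^2f+dV_a\odot df$ belongs to the drift/Bakry--\'Emery structure of the operator $\divv(V_a\nabla\cdot)$, whose natural curvature hypothesis is of the form $\operatorname{Ric}-\nabla^2\log V_a\ge0$ rather than \eqref{s1:dfn-sub-static}; the sub-static tensor simply does not appear when you integrate by parts on $V_a^{-1}|P|^2$. The paper avoids this by solving instead
\begin{equation*}
\Delta f-\frac{\Delta V_a}{V_a}f=H_k-\overline{H}_k^{V_a}\ \text{ in }\Sigma,\qquad f_\mu-\frac{(V_a)_\mu}{V_a}f=0\ \text{ on }\partial\Sigma,
\end{equation*}
which is still in divergence form because $V\Delta f-f\Delta V=\divv(V\nabla f-f\nabla V)$, so your compatibility/solvability and first integration-by-parts steps go through verbatim with the vector field $V_a\nabla f-f\nabla V_a$ in place of $V_a\nabla f$. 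With this choice the Li--Xia formula, the Robin-type boundary condition, and the identity $h^{\partial B}-\frac{(V_a)_{\overline{N}}}{V_a}g^{\partial B}=0$ on $\partial B$ (a computation you allude to but must actually carry out) give precisely $\int_\Sigma V_a\bigl(A^2-|A_{ij}|^2\bigr)\ge0$, hence $\int_\Sigma V_a|\mathring{A}|^2\le\frac{n-1}{n}\int_\Sigma V_aA^2$ with the sharp constant. Note also that with your pure Neumann condition $\partial_\mu f=0$ the cross boundary term $2V(f_\mu-\frac{V_\mu}{V}f)(\Delta f-\frac{\Delta V}{V}f)$ in \eqref{s4:Li-Xia-formula} would not vanish, so even reverting to the correct formula would not rescue your boundary analysis without also changing the boundary condition.

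The rigidity discussion has a secondary gap. Under strict sub-staticity the vanishing of the bulk term forces $\nabla f-\frac{\nabla V_a}{V_a}f=0$, i.e.\ $f=\alpha V_a$ (not $\nabla f\equiv0$), which gives $A_{ij}\equiv0$ and $H_k\equiv\overline{H}_k^{V_a}$. But constancy of $H_k$ alone does not yield a spherical cap for an immersed free boundary hypersurface; the paper additionally uses the equality case of the Cauchy--Schwarz step, namely $(\mathring{T}_k)_{ij}=\beta\mathring{A}_{ij}$ for some constant $\beta$, to conclude $\mathring{T}_k\equiv0$, and then Lemma \ref{s2:lem-Tk-umbilicity} together with the elliptic point and an open--closed argument to propagate umbilicity. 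You omit the proportionality $\mathring{T}_k=\beta\mathring{P}$ entirely, so your concluding sentence appeals to a rigidity statement (``constant $H_k$ plus an elliptic point implies spherical cap'') that is not available in this generality.
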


As a direct application, we obtain the following corollary.
\begin{cor}\label{cor-weighted-free-boundary-Euclidean}
	Let $n\geq 2$ and $x:M \ra \mathbb R^{n+1}$ be a smooth embedded oriented hypersurface $\Sigma$ with free boundary in a ball $B$. If $\Sigma$ is strictly convex, then there exists a unit vector $a\in\mathbb R^{n+1}$ such that $V_a>0$ and for $k\in \{1,\cdots,n-1\}$, we have
	\begin{align*}
	\int_{\Sigma}  V_a|H_k-\overline{H}_k^{V_a}|^2 \leq \frac{n(n-1)}{(n-k)^2}\int_{\Sigma} V_a |\mathring{T}_k|^2.
	\end{align*}
	Equality holds if and only if $\Sigma$ is a spherical cap. 
\end{cor}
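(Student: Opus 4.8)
The plan is to deduce Corollary~\ref{cor-weighted-free-boundary-Euclidean} directly from Theorem~\ref{thm-II} in the Euclidean case $K=0$, where the potential is $V_a=\langle x,a\rangle$. Thus everything reduces to producing a unit vector $a$ for which (i) $V_a>0$ on $\Sigma$, so that $\Sigma\subset B_{a+}$, and (ii) the triple $(\Sigma,g,V_a)$ is sub-static; the inequality and its rigidity then follow from Theorem~\ref{thm-II}, since strict convexity makes every interior point elliptic. First I would record the two pointwise identities for the restriction of the linear function $\langle x,a\rangle$ to $\Sigma$, namely $\nabla^2 V_a=-\langle\nu,a\rangle\,h$ and $\Delta V_a=-\langle\nu,a\rangle\,H$, with $H=\operatorname{tr}_g h=\sum_i\kappa_i$ and the sign convention for $h$ under which geodesic spheres are convex, together with the Gauss equation $\operatorname{Ric}^\Sigma=Hh-h^2$.

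Substituting these into the sub-static operator gives
\[
\Delta V_a\,g-\nabla^2 V_a+V_a\operatorname{Ric}^\Sigma=-\langle\nu,a\rangle\,(Hg-h)+V_a\,h\,(Hg-h),
\]
so that $h$, the tensor $Hg-h$, and the sub-static operator share the same principal directions, the eigenvalues of the latter being $(V_a\kappa_i-\langle\nu,a\rangle)(H-\kappa_i)$. Because $\Sigma$ is strictly convex and $n\geq2$, each factor $H-\kappa_i=\sum_{j\neq i}\kappa_j$ is strictly positive, so condition \eqref{s1:dfn-sub-static} collapses to the scalar inequality $\langle\nu,a\rangle\leq \kappa_{\min}\,V_a$ at every point, where $\kappa_{\min}$ is the smallest principal curvature. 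In particular, if I can arrange both $V_a>0$ and $\langle\nu,a\rangle\leq0$ on $\Sigma$, then $\langle\nu,a\rangle\leq0<\kappa_{\min}V_a$, so the triple is \emph{strictly} sub-static, which is exactly what the rigidity half of Theorem~\ref{thm-II} demands.

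It remains to construct such an $a$, and this is the geometric heart of the argument. Using embeddedness, $\Sigma$ together with a spherical cap $T\subset\partial B$ bounds a convex body $\hat\Sigma$ whose outward unit normal along $\Sigma$ is $\nu$. The free boundary condition forces $\langle x,\nu\rangle=0$ along $\partial\Sigma$; since every interior point $z$ of a convex body satisfies $\langle y-z,w\rangle>0$ for all boundary points $y$ and outward normals $w$, while $\langle q-O,\nu(q)\rangle=\langle q,\nu(q)\rangle=0$ for $q\in\partial\Sigma$ (as $O$ is the origin), the centre $O$ of $B$ cannot lie in $\operatorname{int}\hat\Sigma$. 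I would then upgrade this, using strict convexity, to $O\notin\hat\Sigma$, so that $\hat\Sigma$ lies strictly on one side of a hyperplane through $O$; its normal $a$ gives $V_a>0$ on $\Sigma$, which is (i). For (ii) I would use that strict convexity makes the Gauss map $\nu\colon\Sigma\to\mathbb S^{n}$ an embedding and that the orthogonality along $\partial\Sigma$ confines its image $\nu(\Sigma)$ to a closed hemisphere whose pole can be taken to be the same $a$, equivalently that $O\notin\operatorname{int}\operatorname{conv}\nu(\Sigma)$.

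The main obstacle is precisely this simultaneous choice of $a$: separating $O$ from $\hat\Sigma$ while confining the whole Gauss image $\nu(\Sigma)$ to the complementary hemisphere. The model case of a spherical cap $\Sigma\subset\partial B_\rho(c)$, for which $a=c/|c|$ yields $V_a=(|x|^2+R^2)/(2|c|)>0$ and $\langle\nu,a\rangle=(\langle x,a\rangle-|c|)/\rho<0$, confirms that both conditions hold and are compatible; the difficulty is to establish the corresponding positioning for a general strictly convex free boundary hypersurface, where the only available tools are strict convexity and the orthogonality identity $\langle x,\nu\rangle|_{\partial\Sigma}=0$ (in particular one must rule out the degenerate possibility $O\in\Sigma$, which would force $V_a(O)=0$ for every $a$). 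Granting the existence of $a$, Theorem~\ref{thm-II} yields the stated inequality; as for equality, a spherical cap is totally umbilic, whence $\mathring T_k\equiv0$ and $H_k$ is constant so both sides vanish, while conversely equality together with the strict sub-static property and the interior elliptic points (automatic by strict convexity) forces $\Sigma$ to be a spherical cap.
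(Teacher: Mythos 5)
Your reduction is exactly the paper's: you compute $\nabla^2 V_a=-(V_a)_\nu h$, $\Delta V_a=-(V_a)_\nu H$ and the Gauss equation to factor the sub\-/static operator as $\bigl(V_a h_{ik}-(V_a)_\nu g_{ik}\bigr)\bigl(Hg_{kj}-h_{kj}\bigr)$, which is precisely \eqref{s4:sub-static-expression}; you then note that strict convexity with $n\ge 2$ makes each eigenvalue factor $H-\kappa_i=\sum_{j\ne i}\kappa_j$ positive, so that $V_a>0$ together with $\langle\nu,a\rangle\le 0$ forces strict sub\-/staticity, and you invoke Theorem~\ref{thm-II} for both the inequality and (since strict convexity supplies interior elliptic points) the rigidity. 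All of this is correct and coincides with the paper's argument.

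The genuine gap is the one you flag yourself: the existence of a \emph{single} unit vector $a$ with $\langle x,a\rangle>0$ and $\langle\nu,a\rangle\le 0$ simultaneously on all of $\Sigma$ is never established --- you end with ``granting the existence of $a$.'' Your separation sketch, even if completed, only produces some $a$ with $V_a>0$ (by separating the origin from the convex body $\hat\Sigma$); the statement that the Gauss image $\nu(\Sigma)$ then lies in the closed hemisphere opposite to that \emph{same} $a$ is an independent condition, and its compatibility with the first is the nontrivial geometric content of the corollary. The paper does not prove this either: it imports it from Lemmas~11 and~12 of \cite{Lambert-Scheuer2016}, which assert exactly that a strictly convex free boundary hypersurface in a Euclidean ball admits a unit vector $a$ with $V_a=\langle x,a\rangle>0$ and $(V_a)_\nu=\langle a,\nu\rangle<0$. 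So your proposal is structurally identical to the paper's proof but leaves open (rather than citing or proving) the one step you correctly identify as the geometric heart of the argument; as written it is incomplete.
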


The paper is organized as follows. In \S \ref{sec:2}, we collect basic facts about free boundary hypersurfaces and weighted Reilly formula. In \S \ref{sec:3}, we give the proof of Theorems \ref{thm-I}, \ref{thm-II} and Corollaries \ref{cor-convex-hypersurface}, \ref{cor-low-dim}, \ref{cor-weighted-free-boundary-Euclidean}. 
\begin{ack}
	Y. Chen and H. Li was partially supported by NSFC Grant No.11831005 and NSFC-FWO grant No.1196131001. Y. Hu was partially supported by NSFC Grant No.12101027. 
\end{ack}

\section{Preliminaries}\label{sec:2}
Let $n\ge 2$. Let $(\Sigma,g)$ be a free boundary hypersurface given by an immersion $x:M^n \ra B \subset \mathbb M^{n+1}(K)$. We denote by $\-\nabla$, $\-\D$ and $\-\nabla^2$ the gradient, the Laplacian and the Hessian on $(\mathbb M^{n+1}(K),\-g)$ respectively, while by $\nabla$, $\D$ and $\nabla^2$ the gradient, the Laplacian and the Hessian on $(\Sigma,g)$ respectively. We choose one of the unit normal vector field along the immersion $x$ and denote it by $\nu$. Denote by $h$ and $H$ the second fundamental form and the mean curvature of the hypersurface $\Sigma$ respectively. Precisely, $h(X,Y)=\-g(\-\nabla_X \nu,Y)$ and $H=\tr h$. Since the mean curvature vector is independent of the choice of $\nu$, we make a convention on $\nu$ to be the opposite direction of mean curvature vector. We denote $\mu$ the outward unit normal of $\partial \Sigma$ in $\Sigma$, $\-N$ the outward normal of $\partial B$ in $B$ and $\-\nu$ the outward normal of $\partial \Sigma$ in $\partial B$, respectively. If $\Sigma$ is a free boundary hypersurface in $B$, then $\mu=\-N$ and $\-\nu=\nu$ along the boundary $\partial \Sigma$. See Figure \ref{fig} below.
\begin{figure}[htbp]
	\begin{center}
		\begin{tikzpicture}[scale=3]
		\draw (0,0) circle (1);
		\draw (-0.866,0.5) arc[start angle=-120, end angle=-60, radius=1.732]; 
		\draw[-stealth] (0,0.268) --(0,0) node[right] {$\nu$};
		\draw[-stealth] (1,0.65) -- (0.866,0.5);
		\node at (1,0.72) {$\partial\Sigma$};
		\node at (0.5,0.25) {$\Sigma$};
		\node at (0.5,1) {$\partial B$};
		%\node at (0,0.65) {$\Omega$};
		\draw[-stealth] (-0.866,0.5)--(-1.083,0.625) node[above] {$\mu=\overline{N}$};
		\draw[-stealth] (-0.866,0.5)--(-0.991,0.278) node[left] {$\overline{\nu}=\nu$};
		\end{tikzpicture}
	\end{center}
	\caption{}
	\label{fig}
\end{figure}

The principal curvatures $\k=(\k_1,\cdots,\k_n)$ are the eigenvalues of the Weingarten matrix $\mathcal{W}=(h_i^j)=(g^{jk}h_{ki})$ on $\Sigma$, i.e., the eigenvalues of the second fundamental form $h$ with respect to the induced metric $g$. For $k\in \{1,2,\cdots,n\}$, the $k$-th mean curvature $H_k$ is defined by 
\begin{align}\label{dfn-mean-curvature}
H_k=\frac{1}{k!}\sum_{\substack{1\leq i_1,\cdots,i_k\leq n\\1\leq j_1,\cdots,j_k\leq n}} \d^{j_1j_2\cdots j_k}_{i_1i_2\cdots i_k} h^{i_1}_{j_1} h^{i_2}_{j_2}\cdots h^{i_k}_{j_k}=\sum_{i_1<\cdots<i_k}\k_{i_1}\cdots \k_{i_k},
\end{align} 
where $\d_{j_1\cdots j_k}^{i_1 \cdots i_k}$ is the generalized Kronecker delta function. For $m\in \{0,1,\cdots,n-1\}$, the $m$-th Newton tensor of $\mathcal{W}=(h_i^j)$ is defined by
\begin{align}\label{dfn-Newton-tensor}
(T_{m})^j_i=\frac{1}{m!}\sum_{\substack{1\leq i_1,\cdots,i_{m}\leq n\\1\leq j_1,\cdots,j_{m}\leq n}} \d^{j j_1j_2\cdots j_m}_{i i_1i_2\cdots i_m} h^{i_1}_{j_1} h^{i_2}_{j_2}\cdots h^{i_{m}}_{j_{m}}.
\end{align}
By Codazzi equations for hypersurfaces in space forms, we have 
\begin{lem}[\cite{Reilly1973}]\label{s2:newton-tensor-property} Let $\Sigma$ be an immersed hypersurface in space forms. Let $\divv$ be the divergence on $\Sigma$. Then for $m\in \{0,\cdots,n-1\}$, we have
	\begin{align}\label{divengence-Netwton-tensor}
	\divv(T_{m})=0, \quad \operatorname{tr}(T_{m})=(n-m)H_{m}.
	\end{align} 
\end{lem}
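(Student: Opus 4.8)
The plan is to establish the two identities of \eqref{divengence-Netwton-tensor} separately, working in a local frame with the summation convention and manipulating the generalized Kronecker delta in the definition \eqref{dfn-Newton-tensor}. The trace identity is purely algebraic, whereas the divergence identity is the only place where the constant-curvature hypothesis enters, namely through the Codazzi equations.

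For the trace, I would set the free upper index equal to the free lower index in \eqref{dfn-Newton-tensor} and sum, so that
\[
\tr(T_m)=(T_m)^i_i=\frac{1}{m!}\,\d^{i j_1\cdots j_m}_{i i_1\cdots i_m}\,h^{i_1}_{j_1}\cdots h^{i_m}_{j_m}.
\]
The single ingredient is the contraction rule for the generalized Kronecker delta: contracting one upper--lower pair of an order-$(m+1)$ delta over an $n$-dimensional index range produces the scalar factor $n-m$, that is $\d^{i j_1\cdots j_m}_{i i_1\cdots i_m}=(n-m)\,\d^{j_1\cdots j_m}_{i_1\cdots i_m}$. Substituting this and comparing with the definition \eqref{dfn-mean-curvature} of $H_m$ gives $\tr(T_m)=(n-m)H_m$ immediately.

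For the divergence I would compute $\nabla_j (T_m)^j_i$ and apply the product rule to the $m$-fold product in \eqref{dfn-Newton-tensor}, which yields $m$ terms in each of which exactly one factor is differentiated. Because the delta is antisymmetric in both its upper and its lower indices while the undifferentiated factors enter symmetrically, all $m$ terms coincide after relabelling the dummy indices, and the computation collapses to the single representative term
\[
\nabla_j (T_m)^j_i=\frac{1}{(m-1)!}\,\d^{j j_1\cdots j_m}_{i i_1\cdots i_m}\,(\nabla_j h^{i_1}_{j_1})\,h^{i_2}_{j_2}\cdots h^{i_m}_{j_m}.
\]
The crux is now the Codazzi equation: since $\mathbb{M}^{n+1}(K)$ has constant sectional curvature, the ambient curvature term in Codazzi vanishes and $\nabla h$ is totally symmetric, so $\nabla_j h^{i_1}_{j_1}$ is symmetric in the pair $(j,j_1)$; as the delta is antisymmetric in the corresponding upper pair, the contraction of the symmetric quantity against the antisymmetric delta vanishes, giving $\divv(T_m)=0$. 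I expect the main obstacle to be bookkeeping rather than conceptual --- correctly tracking the two antisymmetries of the delta and checking that the $m$ product-rule terms collapse to one --- since once the Codazzi symmetry is available the vanishing is automatic, which is precisely why this identity is special to constant-curvature ambient spaces.
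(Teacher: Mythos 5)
Your proof is correct and is exactly the standard argument the paper delegates to the citation of Reilly (1973): the trace identity follows from the contraction rule $\d^{i j_1\cdots j_m}_{i i_1\cdots i_m}=(n-m)\d^{j_1\cdots j_m}_{i_1\cdots i_m}$, and the divergence identity follows from the collapse of the product rule to one term plus the Codazzi symmetry of $\nabla h$ in constant curvature, which kills the contraction against the antisymmetric delta. Since the paper itself offers no proof beyond the remark that the lemma follows from the Codazzi equations, your write-up supplies precisely the intended argument and contains no gaps.
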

Let $\mathring{T}_m=T_m-\frac{1}{n}\operatorname{tr}(T_{m}) g$. It follows from \eqref{divengence-Netwton-tensor} that 
\begin{align}\label{divengence-tracefree-Newton-tensor}
\divv(\mathring{T}_m)=-\frac{n-m}{n}\nabla H_m.
\end{align}

We collect the basic facts on $H_k$ (see e.g. \cite{Hardy-1934}). The $k$-th Garding cone is defined by $\G_k^{+}=\{\k \in \mathbb R^{n} ~|~ H_i(\k)>0, i=1,\cdots,k\}$.
\begin{lem} Let $k\in \{1,2,\cdots,n-1\}$. If $\k\in \G_k^{+}$, the following Newton-MacLaurin inequality holds
	\begin{align}\label{s2:Newton-MacLaurin-ineq}
	\frac{n-k}{n}H_{1}H_{k}\geq (k+1) H_{k+1}.
	\end{align}
	Equality holds in \eqref{s2:Newton-MacLaurin-ineq} if and only if $\k=cI$ for some constant $c>0$, where $I=(1,\cdots,1)$.
\end{lem}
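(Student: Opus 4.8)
The plan is to reduce \eqref{s2:Newton-MacLaurin-ineq} to the classical Newton inequalities by passing to the \emph{normalized} mean curvatures. First I would set $p_j := H_j/\binom{n}{j}$ for $0\le j\le n$, so that $p_0=1$, and rewrite both sides of the claimed inequality in terms of the $p_j$. Using $H_1=np_1$, $H_k=\binom{n}{k}p_k$, $H_{k+1}=\binom{n}{k+1}p_{k+1}$ together with the identity $\binom{n}{k+1}=\binom{n}{k}\frac{n-k}{k+1}$, a short computation shows that both $\frac{n-k}{n}H_1H_k$ and $(k+1)H_{k+1}$ carry the common factor $(n-k)\binom{n}{k}$, so that \eqref{s2:Newton-MacLaurin-ineq} is \emph{equivalent} to the compact inequality
\begin{align*}
p_1\,p_k\ \ge\ p_{k+1}.
\end{align*}
This reformulation is the key simplification, turning the normalization-heavy statement into a clean Maclaurin-type comparison.

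Next I would invoke the Newton inequalities $p_j^2\ge p_{j-1}p_{j+1}$ for $1\le j\le n-1$, which hold for arbitrary real $\k=(\k_1,\dots,\k_n)$ (see \cite{Hardy-1934}) and require no sign hypothesis. The cone assumption enters only now: on $\G_k^+$ we have $p_1,\dots,p_k>0$, and of course $p_0=1>0$, so for each $j\in\{1,\dots,k\}$ the denominator $p_{j-1}p_j$ is strictly positive and I may divide the $j$-th Newton inequality by it to get $p_j/p_{j-1}\ge p_{j+1}/p_j$. Chaining these gives the monotone string
\begin{align*}
p_1=\frac{p_1}{p_0}\ \ge\ \frac{p_2}{p_1}\ \ge\ \cdots\ \ge\ \frac{p_{k+1}}{p_k},
\end{align*}
and comparing the two ends yields $p_1\ge p_{k+1}/p_k$, i.e. $p_1p_k\ge p_{k+1}$, as desired. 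The step that requires genuine care—and what I regard as the main obstacle—is precisely this positivity bookkeeping: the ratio chain is legitimate only because every intermediate denominator is positive on $\G_k^+$, and one must verify that dividing preserves the direction of each inequality and that no $p_{j+1}$ with $j<k$ can spoil the monotonicity.

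For the equality case I would note that $p_1p_k=p_{k+1}$ forces every inequality in the chain to be an equality, in particular the first, $p_1^2=p_0p_2=p_2$. Using the elementary sum-of-squares identity
\begin{align*}
p_1^2-p_2=\frac{1}{n^2(n-1)}\sum_{i<j}(\k_i-\k_j)^2\ \ge\ 0,
\end{align*}
equality there already forces $\k_1=\cdots=\k_n=:c$, and since $\k\in\G_k^+$ gives $H_1>0$ we conclude $c>0$, so $\k=cI$ with $c>0$. The converse is immediate, since at $\k=cI$ every Newton inequality is an equality. The only delicate point in this last part is to confirm that the \emph{first} link of the chain is necessarily an equality under the hypothesis $p_1p_k=p_{k+1}$, so that the clean criterion $\k=cI$ is forced rather than merely some weaker collinearity condition.
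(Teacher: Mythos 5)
Your proof is correct, and it is essentially the standard argument that the paper delegates to the citation of Hardy--Littlewood--P\'olya rather than writing out: normalize to $p_j=H_j/\binom{n}{j}$, reduce the claim to the Maclaurin-type inequality $p_1p_k\ge p_{k+1}$, and chain the Newton inequalities $p_j^2\ge p_{j-1}p_{j+1}$ using the positivity of $p_1,\dots,p_k$ guaranteed by $\k\in\G_k^{+}$. The reduction identity $(k+1)\binom{n}{k+1}=(n-k)\binom{n}{k}$, the ratio chain, and the equality analysis via $p_1^2-p_2=\frac{1}{n^2(n-1)}\sum_{i<j}(\k_i-\k_j)^2$ are all verified correctly, so nothing further is needed.
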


It is clear that if $\mathring{T}_1=0$, then $\mathring{T}_1=-\mathring{h}$ which implies $\k=cI$. For $k\in \{2,\cdots,n\}$, the following lemma characterizes the traceless property of $T_k$.
\begin{lem}\label{s2:lem-Tk-umbilicity}
	Let $k\in \{2,\cdots,n-1\}$. If $\k\in \G_{k}^{+}$, then $\mathring{T}_k=0$ if and only if $\k=cI$ for some constant $c>0$.
\end{lem}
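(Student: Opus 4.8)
The plan is to show that the vanishing of $\mathring T_k$ becomes, after a single contraction with the Weingarten matrix $\mathcal W$, exactly the equality case of the Newton--MacLaurin inequality \eqref{s2:Newton-MacLaurin-ineq}, whose rigidity has already been recorded. The ``if'' direction needs no work: if $\k=cI$ then, in a frame diagonalizing $\mathcal W$, every eigenvalue of $T_k$ equals $\binom{n-1}{k}c^k$, so $T_k$ is a scalar multiple of $g$ and hence $\mathring T_k=0$; moreover $c>0$ because $\k\in\G_k^+$ forces $H_1>0$. So the whole content is the converse.

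For the converse I would first record the pointwise identity
\begin{align*}
\tr(T_k\mathcal W)=(k+1)H_{k+1}.
\end{align*}
This I verify in a frame diagonalizing $\mathcal W$: there the $i$-th eigenvalue of $T_k$ is $\partial H_{k+1}/\partial\k_i$, and Euler's relation for the homogeneous polynomial $H_{k+1}$ of degree $k+1$ gives $\sum_i\k_i\,\partial H_{k+1}/\partial\k_i=(k+1)H_{k+1}$. Now assume $\k\in\G_k^+$ and $\mathring T_k=0$. Then $T_k=\tfrac1n(\tr T_k)\,g=\tfrac{n-k}{n}H_k\,g$ by Lemma \ref{s2:newton-tensor-property}, and contracting with $\mathcal W$ yields
\begin{align*}
(k+1)H_{k+1}=\tr(T_k\mathcal W)=\tfrac{n-k}{n}H_k\,\tr(\mathcal W)=\tfrac{n-k}{n}H_1H_k.
\end{align*}
This is precisely equality in \eqref{s2:Newton-MacLaurin-ineq}; since $\k\in\G_k^+$, the equality characterization there forces $\k=cI$ for some $c>0$, as desired.

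The argument is short because all the analytic input is already packaged into \eqref{s2:Newton-MacLaurin-ineq}; the only points to get right are the contraction identity $\tr(T_k\mathcal W)=(k+1)H_{k+1}$ and the bookkeeping that $\k\in\G_k^+$ is exactly the hypothesis under which both the sharp inequality and its rigidity are available. If one preferred a route not invoking \eqref{s2:Newton-MacLaurin-ineq}, I would instead use $(T_k)_i=H_k-\k_i(T_{k-1})_i$ to turn $\mathring T_k=0$ into $\k_i(T_{k-1})_i=\tfrac{k}{n}H_k$ for every $i$; the standard fact that $\k\in\G_k^+$ makes $T_{k-1}$ positive definite then gives $\k_i>0$ for all $i$, after which the difference formula $(T_k)_i-(T_k)_j=(\k_j-\k_i)\,p_{k-1}(\k;\widehat{i,j})$, with $p_{k-1}(\k;\widehat{i,j})>0$ a nonempty sum of products of the positive $\k_l$ ($l\neq i,j$, using $1\le k-1\le n-2$), forces all $\k_i$ equal. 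The first route is cleaner and stays within the results already established in this paper, so I would present that one.
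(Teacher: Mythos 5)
Your proof is correct and follows essentially the same route as the paper: contract $\mathring T_k=0$ (i.e.\ $T_k=\tfrac{n-k}{n}H_k\,g$) with the Weingarten matrix to land exactly on the equality case of the Newton--MacLaurin inequality \eqref{s2:Newton-MacLaurin-ineq}, whose rigidity under $\k\in\G_k^+$ gives $\k=cI$. The only difference is that you make the identity $\tr(T_k\mathcal W)=(k+1)H_{k+1}$ and the trivial direction explicit, which the paper leaves implicit.
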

\begin{proof}
	If $\mathring{T}_k=0$, we have $(T_k)_j^i=\frac{n-k}{n}H_k\d_j^i$. Multiplying by $h_i^j$ on the both sides, we get
	\begin{align*}
	(k+1)H_{k+1}=\frac{n-k}{n}H_k H_1,
	\end{align*}
	which is an equality in \eqref{s2:Newton-MacLaurin-ineq}. If $\k\in \G_k^{+}$, then $\k=cI$ for some constant $c>0$. The converse is obvious.
\end{proof}

The following proposition is a well-known fact for free boundary hypersurfaces in a geodesic ball in space forms.
\begin{lem}[\cite{Ros-Souam1997,Li-Xiong2018,WX19}]
	Let $x:M\ra \mathbb R^{n+1}$ be an immersed oriented hypersurface $\Sigma$ with free boundary in a ball $B$. Then $\mu$ is a principal direction of $\partial \Sigma$ in $\Sigma$. Namely, 
	\begin{align*}
	h(e,\mu)=0, \quad  \text{for any $e\in T(\partial \Sigma)$.}
	\end{align*}
	In turn, $\-\nabla_\mu \nu=h(\mu,\mu)\mu$. Moreover, for any tangential vector field $Z\in T(\partial \Sigma)$, we have
	\begin{align}\label{s2:T_k-principal-direction}
	T_k(\mu,Z)=0, \quad k\in\{0,1,\cdots,n-1\}.
	\end{align}
\end{lem}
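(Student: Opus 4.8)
The plan is to exploit the free boundary condition, which along $\partial\Sigma$ identifies $\mu$ with the outward normal $\-N$ of the geodesic sphere $\partial B$ and identifies $\nu$ with $\-\nu\in T(\partial B)$, together with the elementary fact that a geodesic sphere in a space form is totally umbilical. All three assertions will follow from these two inputs, and the argument is uniform in $K\in\{-1,0,1\}$ since only the umbilicity of $\partial B$ is used.

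First I would establish that $\mu$ is a principal direction. Fix $e\in T(\partial\Sigma)$. Since $h(e,\mu)=\-g(\-\nabla_e\nu,\mu)$ and $\mu=\-N$ on $\partial\Sigma$, it suffices to compute $\-g(\-\nabla_e\nu,\-N)$. Along $\partial\Sigma$ we have $\nu=\-\nu\in T(\partial B)$, so $\-g(\nu,\-N)=0$; differentiating this identity in the direction $e\in T(\partial\Sigma)\subset T(\partial B)$ gives
\[
\-g(\-\nabla_e\nu,\-N)=-\-g(\nu,\-\nabla_e\-N).
\]
By umbilicity the shape operator of $\partial B$ with respect to $\-N$ is scalar, $\-\nabla_e\-N=\l e$ for some $\l$, and since $\nu=\-\nu$ is the outward normal of $\partial\Sigma$ inside $T(\partial B)$ we have $e\perp\nu$; hence $h(e,\mu)=-\l\,\-g(\nu,e)=0$, which is precisely the principal direction property. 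The relation $\-\nabla_\mu\nu=h(\mu,\mu)\mu$ then follows at once: on a hypersurface $\-\nabla_\mu\nu$ is tangent to $\Sigma$, and writing $T_p\Sigma=T_p(\partial\Sigma)\oplus\R\mu$ at $p\in\partial\Sigma$, the vanishing $\-g(\-\nabla_\mu\nu,e)=h(\mu,e)=0$ for all $e\in T(\partial\Sigma)$ forces $\-\nabla_\mu\nu$ to lie in $\R\mu$, the coefficient being $h(\mu,\mu)$.

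Finally, for the Newton tensor identity I would observe that the Weingarten operator $\mc W$ preserves the splitting $T_p\Sigma=T_p(\partial\Sigma)\oplus\R\mu$: indeed $\mc W\mu=h(\mu,\mu)\mu$, while for $e\in T(\partial\Sigma)$ the identity $h(e,\mu)=0$ gives $\-g(\mc W e,\mu)=0$, so $\mc W e\in T(\partial\Sigma)$. Since each Newton tensor is a polynomial in $\mc W$ with scalar coefficients, namely $T_k=\sum_{l=0}^{k}(-1)^l H_{k-l}\mc W^{l}$, it likewise preserves the splitting; in particular $T_k\mu\in\R\mu$, whence $T_k(\mu,Z)=\-g(T_k\mu,Z)=0$ for every $Z\in T(\partial\Sigma)$. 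I expect the only delicate point to be the computation of $h(e,\mu)$, where one must correctly combine the umbilicity of $\partial B$ with the orthogonality relations $\mu=\-N$ and $\nu=\-\nu$; once these are in hand, the statement $\-\nabla_\mu\nu=h(\mu,\mu)\mu$ and the identity \eqref{s2:T_k-principal-direction} are formal consequences of the invariance of the splitting under $\mc W$.
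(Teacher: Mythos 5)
Your argument is correct and is precisely the standard proof of this lemma, which the paper itself does not reprove but simply cites from Ros--Souam and Wang--Xia: differentiating $\-g(\nu,\-N)=0$ along $\partial\Sigma$ and invoking the total umbilicity of the geodesic sphere $\partial B$ yields $h(e,\mu)=0$, and the identity $T_k(\mu,Z)=0$ then follows because $T_k$ is a polynomial in the Weingarten operator, which preserves the splitting $T(\partial\Sigma)\oplus\R\mu$. I see no gaps; your argument is also correctly uniform in $K\in\{-1,0,1\}$, which is what the paper actually needs despite the lemma being stated only for $\R^{n+1}$.
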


In \cite{Scheuer-Wang-Xia2018}, Scheuer, Wang and Xia defined the following geometric quantities for free boundary hypersurfaces in a unit Euclidean ball, which are expected to be the counterparts to the quermassintegrals for closed hypersurfaces in space forms: 
\begin{align*}
W_0(\hat{\Sigma})=&|\hat{\Sigma}|, \qquad W_1(\hat{\Sigma})=\frac{1}{n+1}|\Sigma|,\\
W_k(\hat{\Sigma})=&\frac{1}{n+1}\int_{\Sigma}\frac{H_{k-1}}{\binom{n}{k-1}}+\frac{k-1}{(n+1)(n-k+2)}W_{k-2}^{\mathbb S}(\widehat{\partial \Sigma}),\quad 2\leq k \leq n+1,
\end{align*}
where $|\hat{\Sigma}|$ denotes the $(n+1)$-dimensional Hausdorff measure of $\hat{\Sigma}$, and $W_{k-2}^{\mathbb S}(\widehat{\partial \Sigma})$ denotes the $(k-2)$-th quermassintegral of the closed hypersurface $\partial \Sigma\subset \mathbb S^n$, see \cite[(1.1)]{Scheuer-Wang-Xia2018}. Furthermore, the spherical cap of radius $R$ around $a\in \mathbb S^n$ is defined by
$$
C_R(a)=\{y \in \-{\mathbb B}^{n+1} :~|y-\sqrt{R^2+1}~a|=R\}, \quad R<\infty,
$$
and $\hat{C}_R(a)$ is the convex domain enclosed by the spherical cap $C_R(a)$ in $\-{\mathbb B}^{n+1}$, where the argument $a$ will be dropped in cases where it is not relevant. They proved that if $\Sigma$ is a convex hypersurface with free boundary in the unit ball $\-{\mathbb B}^{n+1}$, then 
\begin{align}\label{quermassintegral-equality}
W_{n+1}(\hat{\Sigma})=\frac{\omega_n}{2(n+1)},
\end{align}
and for $k\in \{0,1,\cdots,n-1\}$, there holds
\begin{align}\label{quermassintegral-inequality}
W_{n}(\hat{\Sigma}) \geq (f_n \circ f_k^{-1}) (W_k(\hat{\Sigma})),
\end{align}
where $f_k=f_k(r)$ is the strictly increasing function $f_k(r)=W_k(\hat{C}_r)$, where $\hat{C}_r$ is the convex domain enclosed by the spherical cap $C_r$ in $\-{\mathbb B}^{n+1}$. Equality holds if and only if $\Sigma$ is a spherical cap or a flat disk. 

In particular, if $n=2$,
\begin{align*}
W_3(\hat{\Sigma})=\frac{1}{3}\int_{\Sigma}H_{2}+\frac{2}{3}W_{1}^{\mathbb S}(\widehat{\partial \Sigma})=\frac{1}{3}\int_{\Sigma}H_{2}+\frac{1}{3}|\partial \Sigma|,
\end{align*}
then the equality \eqref{quermassintegral-equality} becomes
\begin{align}\label{s2:n=2-case}
\frac{1}{3}\int_{\Sigma}H_{2}+\frac{1}{3}|\partial \Sigma|=\frac{\omega_2}{6}=\frac{2\pi}{3}.
\end{align}
If $n=3$, 
\begin{align*}
W_3(\hat{\Sigma})=\frac{1}{12}\int_{\Sigma}H_{2}+\frac{1}{4}W_{1}^{\mathbb S}(\widehat{\partial \Sigma})=\frac{1}{12}\int_{\Sigma}H_{2}+\frac{1}{12}|\partial \Sigma|,\quad W_1(\hat{\Sigma})=\frac{1}{4}|\Sigma|,
\end{align*}
then the inequality \eqref{quermassintegral-inequality} with $k=1$ becomes
\begin{align}\label{s2:n=3-case}
\frac{1}{12}\(\int_{\Sigma}H_{2}+|\partial \Sigma|\)\geq (f_3 \circ f_1^{-1}) \(\frac{1}{4}|\Sigma|\).
\end{align}

The main tool in the proof is the weighted Reilly formula. Reilly \cite{Reilly1977} first established an important integral formula on hypersurfaces in Riemannian manifolds, which is very useful for manifolds with nonnegative Ricci curvature. Qiu and Xia \cite{Qiu-Xia2014} established a weighted Reilly formula which can be used for manifolds with sectional curvature bounded from below. Recently, Li and Xia \cite{LX19} further generalized the Reilly formula and using this new integral formula to prove geometric inequalities in warped product spaces. Abuse of notations, we use $\-\nabla$, $\-\D$ and $\-\nabla^2$ to denote the gradient, the Laplacian and the Hessian on $(\Omega,\-g)$ respectively, while we use $\nabla$, $\D$ and $\nabla^2$ to denote the gradient, the Laplacian and the Hessian on $(\partial \Omega,g)$ respectively. In this paper, we will use the following special form of the weighted Reilly formula.
\begin{prop}\cite[Cor. 5.1]{LX19}
	Let $(\Omega,\-g)$ be a compact Riemannian manifold with smooth connected boundary $\partial\Omega$. Let $V\in C^{\infty}(\overline{\Omega})$ be a smooth positive function on $\-\Omega$ such that $\frac{\-\nabla^2 V}{V}$ is continuous up to $\partial\Omega$. Then for any $f\in C^{\infty}(\overline{\Omega})$, the following integral identity holds:
	\begin{align}\label{s4:Li-Xia-formula}
	&\int_{\Omega} V \(\-\D f-\frac{\-\D V}{V}f\)^2-V\left|\-\nabla^2 f-\frac{\-\nabla^2 V}{V}f \right|^2 d\Omega \nonumber \\
	=&\int_{\Omega} \(\-\D V\-g-\-\nabla^2 V+V\operatorname{Ric}\)\(\-\nabla f-\frac{\-\nabla V}{V}f,\-\nabla f-\frac{\-\nabla V}{V}f\) d\Omega\nonumber \\
	&+\int_{\partial\Omega} V\(h-\frac{V_{\nu}}{V}g\)\( \nabla f -\frac{\nabla V}{V}f,\nabla f-\frac{\nabla V}{V}f \) dA \nonumber\\
	&+\int_{\partial\Omega} VH\( f_{\nu}-\frac{V_{\nu}}{V}f\)^2+2V \( f_{\nu}-\frac{V_{\nu}}{V}f\)\(\D f-\frac{\D V}{V}f \) dA.
	\end{align}
	Here $\nu$ is the unit outward normal of $\partial\Omega$ and $V_{\nu}=\-\nabla_{\nu}V$, $f_{\nu}=\-\nabla_{\nu}f$, $h(\cdot,\cdot)$ and $H$ are the second fundamental form and the mean curvature of $\partial\Omega$, and $\Ric$ is Ricci curvature of $\Omega$ respectively. 
\end{prop}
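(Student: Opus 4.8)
The plan is to prove the weighted Reilly formula \eqref{s4:Li-Xia-formula} by a weighted Bochner-type divergence argument: I would exhibit a single vector field on $\Omega$ whose divergence reproduces the integrand on the left-hand side, integrate over $\Omega$, extract the interior curvature term via the Ricci identity, and convert the flux through $\partial\Omega$ into the three stated boundary integrals using the Gauss--Weingarten relations together with an integration by parts along the closed boundary $\partial\Omega$.

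First I would record the modified gradient and Hessian
\[
\xi=\-\nabla f-\frac{f}{V}\-\nabla V,\qquad S=\-\nabla^2 f-\frac{f}{V}\-\nabla^2 V,
\]
so that $\tr S=\-\D f-\frac{\-\D V}{V}f$ and the left-hand side of \eqref{s4:Li-Xia-formula} is $\int_{\Omega}V[(\tr S)^2-|S|^2]$, while $\langle\xi,\nu\rangle=f_\nu-\frac{V_\nu}{V}f$. A short computation yields the pointwise identity $\-\nabla_i\xi_j=S_{ij}-\frac1V\xi_i\-\nabla_j V$; taking traces and contractions gives $\divv\xi=\tr S-\frac1V\langle\xi,\-\nabla V\rangle$ and $\langle S,\-\nabla\xi\rangle=|S|^2-\frac1V S(\xi,\-\nabla V)$, which will make the $\-\nabla V$-terms cancel later.

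Next I would introduce the weighted Reilly vector field
\[
X=V(\tr S)\,\xi-V\,S(\xi,\cdot)^{\sharp}
\]
and compute $\divv X$. By the symmetry of $S$ the terms involving $\-\nabla V$ cancel in pairs, leaving $\divv X=V[(\tr S)^2-|S|^2]+V\langle\-\nabla(\tr S)-\divv S,\xi\rangle$. The second factor is where curvature enters: commuting third derivatives through the Ricci identity $\-\nabla^i\-\nabla_i\-\nabla_j\phi=\-\nabla_j\-\D\phi+\Ric_{jk}\-\nabla^k\phi$ for $\phi=f$ and $\phi=V$, and using $\-\nabla_j(f/V)=\frac1V\xi_j$, collapses $\-\nabla(\tr S)-\divv S$ to $-\frac1V(\-\D V\,\-g-\-\nabla^2 V+V\Ric)(\xi,\cdot)$. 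Hence
\[
\divv X=V[(\tr S)^2-|S|^2]-(\-\D V\,\-g-\-\nabla^2 V+V\Ric)(\xi,\xi),
\]
and integrating over $\Omega$ and applying the divergence theorem already reproduces the left-hand side and the first interior integral of \eqref{s4:Li-Xia-formula}, leaving the flux $\int_{\partial\Omega}\langle X,\nu\rangle$ to be identified with the boundary terms.

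Finally I would analyse the boundary. Setting $\beta=f_\nu-\frac{V_\nu}{V}f=\langle\xi,\nu\rangle$ and splitting $\xi=\eta+\beta\nu$ with $\eta=\nabla f-\frac{\nabla V}{V}f$ tangential, the Gauss--Weingarten formulas $\-\nabla^2\phi(Y,Z)=\nabla^2\phi(Y,Z)+h(Y,Z)\phi_\nu$ and $\-\nabla^2\phi(Y,\nu)=Y(\phi_\nu)-h(Y,\nabla\phi)$ for tangential $Y,Z$ give $\tr S-S(\nu,\nu)=(\D f-\frac{\D V}{V}f)+H\beta$ and reduce $\langle X,\nu\rangle$ to $V\beta(\D f-\frac{\D V}{V}f)+VH\beta^2+Vh(\eta,\eta)$ plus the residual first-order terms $V[-\eta(f_\nu)+\frac{f}{V}\eta(V_\nu)]$. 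Because $\partial\Omega$ is closed, a tangential integration by parts disposes of these residual terms, and I expect this to be the main obstacle of the proof: exactly as in the unweighted case it doubles the cross term $V\beta(\D f-\frac{\D V}{V}f)$ to the coefficient $2$ in \eqref{s4:Li-Xia-formula}, while the weight derivatives $\frac{f}{V}\eta(V_\nu)$ must be symmetrized so as to upgrade $h(\eta,\eta)$ to $(h-\frac{V_\nu}{V}g)(\eta,\eta)$. Carefully tracking the asymmetry between $f$ and $V$ through this boundary bookkeeping --- and invoking the hypothesis that $\-\nabla^2 V/V$ extends continuously up to $\partial\Omega$ so that every boundary integrand is well defined --- is the delicate part; once it is carried out the three boundary integrals match and \eqref{s4:Li-Xia-formula} follows.
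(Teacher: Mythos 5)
Your proposal is correct, but note that the paper does not prove this proposition at all: it is imported verbatim from Li--Xia \cite[Cor.~5.1]{LX19}, so there is no in-paper argument to compare against. What you have written is essentially a self-contained reconstruction of the Li--Xia derivation, and the steps check out. The substitution $\xi=V\-\nabla(f/V)$, $S=\-\nabla^2 f-\frac{f}{V}\-\nabla^2 V$, the identity $\-\nabla_i\xi_j=S_{ij}-\frac1V\xi_i\-\nabla_j V$, the cancellation of all $\-\nabla V$-terms in $\divv X$ for $X=V(\tr S)\xi-VS(\xi,\cdot)^{\sharp}$, and the commutation step collapsing $\-\nabla(\tr S)-\divv S$ to $-\frac1V\bigl(\-\D V\,\-g-\-\nabla^2 V+V\Ric\bigr)(\xi,\cdot)$ are all correct. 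The one step you flag as delicate --- the tangential integration by parts on the closed boundary --- does close up exactly as you predict: writing the residual terms as $-V\eta(f_\nu)+f\eta(V_\nu)=-\langle\eta,\nabla(V\beta)\rangle-V_\nu|\eta|^2+\beta\langle\nabla V,\eta\rangle$ and using $\divv_{\partial\Omega}\eta=\D f-\frac{\D V}{V}f-\frac1V\langle\eta,\nabla V\rangle$ yields $\int_{\partial\Omega}\bigl(V\beta(\D f-\frac{\D V}{V}f)-V_\nu|\eta|^2\bigr)$, which doubles the cross term to the stated coefficient $2$ and upgrades $h(\eta,\eta)$ to $(h-\frac{V_\nu}{V}g)(\eta,\eta)$. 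So your argument is complete once that final computation is written out, and it uses only the stated hypotheses ($V>0$ on $\-\Omega$ and $\-\nabla^2V/V$ continuous up to $\partial\Omega$, so that $S$ and the boundary integrands are well defined); what it buys over the paper's bare citation is a proof that does not require the reader to consult the more general integral formula of \cite{LX19}.
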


\section{Proof of Main Results}\label{sec:3}
\begin{proof}[Proof of Theorem \ref{thm-I}]
	We first give the proof of the inequality \eqref{s1:1.2} in Theorem \ref{thm-I} and the inequality \eqref{s1:1.1} in Theorem \ref{thm-II} in a unified way.
	We choose $V=1$ in the proof of Theorem \ref{thm-I}, $V=V_a$ in the proof of Theorem \ref{thm-II} respectively. In Theorem \ref{thm-II}, the hypersurface $\Sigma$ is assumed to be contained in an open half-ball $B_{a+}$, which is equivalent to $V_a>0$.
	
	Consider the Neumann boundary value problem 
	\begin{equation}\label{Neumann-eq}
	\left\{\begin{aligned}
	\D f-\frac{\D V}{V}f=&H_k-\-H_k^{V}, \quad &\text{in $\Sigma$},\\
	f_{\mu}-\frac{V_{\mu}}{V}f=&0, \quad &\text{on $\partial \Sigma$},
	\end{aligned}\right.
	\end{equation}
	where $\mu$ is the unit outward normal of $\partial \Sigma$ in $\Sigma$. The existence and uniqueness (up to an additive $\a V$) of the solution to \eqref{Neumann-eq} follows from the Fredholm alternative. For simplicity, we denote by
	$$
	A_{ij}=f_{ij}-\frac{V_{ij}}{V}f, \quad A=\D f-\frac{\D V}{V}f.
	$$
	Let $\mathring{A}_{ij}=A_{ij}-\frac{A}{n}g_{ij}$ be the tracefree part of $A_{ij}$.
	
	Using Eq. \eqref{Neumann-eq}, we have
	\begin{align}\label{s3:key-step}
	\int_{\Sigma} V (H_k-\-H_k^{V})^2 =&\int_{\Sigma} (H_k-\-H_k^{V}) (V \D f-f \D V) \nonumber\\
	=&\int_{\partial \Sigma} (H_k-\-H_k^{V})(V f_{\mu}-f V_{\mu})-\int_{\Sigma} g(\nabla H_k, V\nabla f-f\nabla V) \nonumber\\
	=&\frac{n}{n-k}\int_{\Sigma} g(\divv(\mathring{T}_k), V\nabla f-f\nabla V) \nonumber\\
	=&\frac{n}{n-k}\(\int_{\partial\Sigma} \mathring{T}_k(V \nabla f-f\nabla V,\mu)-\int_{\Sigma} V\sum_{i,j} (\mathring{T}_k)_{ij}\mathring{A}_{ij}\)\nonumber\\
	\leq &\frac{n}{n-k} \(\int_{\Sigma} V |\mathring{T}_k|^2\)^{1/2} \(\int_{\Sigma}V \sum_{i,j}|\mathring{A}_{ij}|^2\)^{1/2}.
	\end{align}
	In the second equality we used integration by parts, and in the third equality we used $f_{\mu}-\frac{V_{\mu}}{V}f=0$ to eliminate the boundary integral on $\partial \Sigma$ and \eqref{divengence-tracefree-Newton-tensor}. In the fourth equality we used integration by parts again. In the last line, the boundary integral on $\partial \Sigma$ vanishes due to \eqref{s2:T_k-principal-direction} and the inequality follows from the H\"older inequality.
	
	We use $\~\nabla$ to denote the gradient on $\partial\Sigma$. We take $\Omega=\Sigma$ and $V=1$ or $V=V_a$ in the weighted Reilly formula \eqref{s4:Li-Xia-formula}, in view of the boundary condition $f_{\mu}-\frac{V_{\mu}}{V}f=0$, we obtain
	\begin{align}\label{s3:Reilly-formula}
	\int_{\Sigma} V \( A^2-\sum_{i,j}|A_{ij}|^2\) 	=&\int_{\partial\Sigma} V \(h^{\partial\Sigma}-\frac{V_{\mu}}{V}g^{\partial\Sigma}\)\( \~\nabla f -\frac{\~\nabla V}{V}f,\~\nabla f-\frac{\~\nabla V}{V}f \) \nonumber\\
	&+\int_{\Sigma} \(\D V g- \nabla^2 V+V\operatorname{Ric}^{\Sigma}\)\(\nabla f-\frac{\nabla V}{V}f,\nabla f-\frac{\nabla V}{V}f\).
	\end{align}
	
	By the free boundary condition, we have $\mu=\-N$. Then the induced metric $g^{\partial \Sigma}$ from $(\Sigma,g)$ coincides with the metric $g^{\partial B}|_{T(\partial \Sigma)\otimes T(\partial \Sigma)}$, where $g^{\partial B}$ is the metric of $\partial B$. Furthermore, the second fundamental form $h^{\partial \Sigma}$ coincides with $h^{\partial B}|_{T(\partial \Sigma)\otimes T(\partial \Sigma)}$, where $h^{\partial B}$ is the second fundamental form of $ \partial B$ given by
	\begin{align}\label{s3:key-geodesic-sphere}
	h^{\partial B}=\left\{\begin{aligned}&(1/R)g^{\partial B}, \quad &\text{if $K=0$},\\
	&(\coth R) g^{\partial B}, \quad &\text{if $K=-1$},\\
	&(\cot R)  g^{\partial B}, \quad &\text{if $K=1$}.
	\end{aligned} 
	\right.
	\end{align} 
    We have the following identity
	\begin{align}\label{s3:key-id}
	h^{\partial B}-\frac{(V_a)_{\-N}}{V_a}g^{\partial B}=0, \quad \text{on $\partial B$},
	\end{align}
	see \cite[(5.12)]{WX19}. For convenience of readers, we give a proof of \eqref{s3:key-id} in Remark \ref{remark-identity}.
	
	In case of Theorem \ref{thm-I}, $\D V g- \nabla^2 V+V\operatorname{Ric}^{\Sigma}=\operatorname{Ric}^{\Sigma}\geq 0$ and $h^{\partial\Sigma}-\frac{V_{\mu}}{V}g^{\partial\Sigma}=h^{\partial\Sigma}\geq 0$.
	In case of Theorem \ref{thm-II}, $\D V_a g-\nabla^2 V_a+V_a \operatorname{Ric}^{\Sigma}\geq 0$ and $h^{\partial \Sigma}-\frac{(V_a)_{\mu}}{V_a}g^{\partial \Sigma}=0$ due to \eqref{s3:key-id}. It follows from \eqref{s3:Reilly-formula} that $\int_{\Sigma} V A^2 - V \sum_{i,j}|A_{ij}|^2 \geq 0$, which yields
	\begin{align}\label{s3:key-inequality}
	\int_{\Sigma} V \sum_{i,j}|\mathring{A}_{ij}|^2 \leq \frac{n-1}{n}\int_{\Sigma} V A^2.
	\end{align}
	Substituting \eqref{s3:key-inequality} into \eqref{s3:key-step}, we obtain
	\begin{align}\label{s3:key-ineq-last-step}
	\int_{\Sigma} V (H_k-\-H_k^{V})^2  \leq &\frac{n}{n-k} \(\int_{\Sigma} V |\mathring{T}_k|^2\)^{\frac{1}{2}} \(\frac{n-1}{n}\int_{\Sigma}V A^2\)^\frac{1}{2} \nonumber\\
	\leq &\frac{n}{n-k} \sqrt{\frac{n-1}{n}} \(\int_{\Sigma} V |\mathring{T}_k|^2\)^{\frac{1}{2}}\(\int_{\Sigma}V |H_k-\-H_k^{V}|^2 \)^\frac{1}{2}.
	\end{align}
	Finally, if $\int_{\Sigma}V |H_k-\-H_k^{V}|^2=0$, then the inequality \eqref{s1:1.2} or \eqref{s1:1.1} holds trivially. Otherwise, by eliminating $\int_{\Sigma}V |H_k-\-H_k^{V}|^2$ on both sides of \eqref{s3:key-ineq-last-step}, we obtain 
	\begin{align*}
	\int_{\Sigma} V (H_k-\-H_k^{V})^2 \leq \frac{n(n-1)}{(n-k)^2}\int_{\Sigma} V|\mathring{T}_k|^2.
	\end{align*}
	
	To complete the proof of Theorem \ref{thm-I}, we define the subset $\Sigma_{+}$ of $\Sigma$ which consists of elliptic points in $\Sigma$, i.e.,
	\begin{align*}
	\Sigma_{+}=\{x\in \operatorname{int} \Sigma ~:~h_{ij}>cg_{ij}\},
	\end{align*} 
	where $c=0$ when $K=0,1$ and $c=1$ when $K=-1$. By assumption, $\Sigma_{+}$ is nonempty and it is obviously open. Now we show that $\Sigma_{+}$ is also closed, by showing that
	\begin{align*}
	h_{ij}|_{\Sigma_{+}} \geq (c+\e) g_{ij}|_{\Sigma_{+}}
	\end{align*} 
	for some constant $\e>0$. 
	
	In view of \eqref{s3:Reilly-formula}, the equality in \eqref{s3:key-inequality} implies that
	\begin{align}\label{s3:Equality-I}
	\operatorname{Ric}^{\Sigma}\(\nabla f,\nabla f\)=0.
	\end{align}
	On the other hand, the equality in \eqref{s3:key-step} implies that there exists a constant $\b\in\mathbb R$ such that 
	\begin{align}\label{s3:Equality-II}
	(\mathring{T}_k)_{ij}=\b \mathring{A}_{ij}=\b (f_{ij}-\frac{\D f}{n}g_{ij}).
	\end{align}
	Let $p \in \Sigma_{+}$, then by Gauss equation
	$$
	\operatorname{Ric}^{\Sigma}_{ij}=(H h_{ij}-h_{ik}h^k_j)+(n-1)Kg_{ij}>0,
	$$
	we have $\operatorname{Ric}^{\Sigma}>0$ in an open neighborhood $U\subset \Sigma_{+}$ of $p$. Then by \eqref{s3:Equality-I}, we have $f\equiv \a$ for some $\a\in \mathbb R$ and hence $H_k\equiv \-H_k>H_k(cI)$ in $U$ by Eq. \eqref{Neumann-eq}. So $\mathring{T}_k\equiv 0$ in $U$ by \eqref{s3:Equality-II}, which implies
	$$
	h_{ij}=\frac{1}{\binom{n}{k}}H_k g_{ij}=\frac{1}{\binom{n}{k}}\-H_k g_{ij}, \quad \text{in $U$}.
	$$ 
	due to Lemma \ref{s2:lem-Tk-umbilicity}. Thus, we have
	$$
	h_{ij}\geq (c+\e)g_{ij}, \quad \text{in $\-U$}.
	$$
	Therefore, $\Sigma_{+}$ is closed and $\Sigma_{+}=\operatorname{int}\Sigma$ by connectedness. It follows that $\Sigma$ is umbilical and hence it is a spherical cap. The proof of Theorem \ref{thm-I} is completed. 	
\end{proof}

\begin{rem}\label{remark-identity}
	Since $\partial B$ is a geodesic sphere of radius $R$ in $\mathbb M^{n+1}(K)$, we have
	\begin{align}\label{s3:normal-vector}
	\-N=\left\{\begin{aligned}&x/R, \quad &\text{if $K=0$},\\
	&x/\sinh R, \quad &\text{if $K=-1$},\\
	&x/\sin R, \quad &\text{if $K=1$}.
	\end{aligned} 
	\right.
	\end{align}
	On the other hand, $\partial B$ can be viewed as the Euclidean sphere $|x|=R_{\mathbb R}$, where $R_{\mathbb R}$ is given by 
	\begin{align}\label{defn-R_R}
	R_{\mathbb R}=\left\{\begin{aligned}
	&R,\quad   &K=0; \\
	&\sqrt{\frac{\cosh R-1}{\cosh R+1}}, \quad  &K=-1;\\
	&\sqrt{\frac{1-\cos R}{1+\cos R}}, \quad &K=1.
	\end{aligned} \right.
	\end{align}
	Let $\{E_i\}_{i=1}^{n+1}$ be an orthonormal basis in $(\mathbb R^{n+1},\d)$ and $\-E_i=e^{-u}E_i$. Then $\{\-E_i\}_{i=1}^{n+1}$ is an orthonormal basis in $(\mathbb M^{n+1}(K),\-g=e^{2u}\d)$. In view of the models of the space forms, any function $f$ defined on $(\mathbb M^{n+1}(K),\-g)$ can be also considered as a function defined on $\mathbb B^{n+1}$ or $\mathbb R^{n+1}$ respectively. Then there holds 
	$$
	\nabla^{\mathbb R}f=\sum_{i=1}^{n+1}E_i(f)E_i=e^{2u}\-E_i(f)\-E_i=e^{2u}\-\nabla f.
	$$
	Then it follows from \eqref{s3:normal-vector} and \eqref{defn-R_R} that
	\begin{align}\label{s3:key-identity}
	\frac{(V_a)_{\-N}}{V_a}=\-g(\-\nabla\log V_a, \-N)=\langle \nabla^{\mathbb R}\log V_a, \-N\rangle=\left\{\begin{aligned}&\frac{\langle N,a\rangle}{\langle x,a\rangle}=\frac{1}{R}, \quad &\text{if $K=0$},\\
	&\frac{\langle N,a\rangle}{\langle x,a\rangle}+\frac{2\langle x,\-N\rangle}{1-|x|^2}=\coth R, \quad &\text{if $K=-1$},\\
	&\frac{\langle N,a\rangle}{\langle x,a\rangle}-\frac{2\langle x,\-N\rangle}{1+|x|^2}=\cot R, \quad &\text{if $K=1$}.
	\end{aligned} \right.
	\end{align}
	Then the identity \eqref{s3:key-id} follows from \eqref{s3:key-geodesic-sphere} and \eqref{s3:key-identity}.
\end{rem}

As a direct application, we give the proof of Corollaries \ref{cor-convex-hypersurface}, \ref{cor-low-dim}.
\begin{proof}[Proof of Corollary \ref{cor-convex-hypersurface}]
	For convex hypersurfaces with free boundary in a ball of $\mathbb R^{n+1}$, the nonnegativity of Ricci curvature follows directly from Gauss equation. Then the inequality follows immediately. To prove the rigidity part, we may assume that $\Sigma$ is not the flat disk, otherwise we are done. Then by \cite[Lem. 3.1]{Lambert-Scheuer2017}, there exists a strictly convex point in the interior of $\Sigma$. Then it follows from Theorem \ref{thm-I} that $\Sigma$ is a spherical cap. This completes the proof of Corollary \ref{cor-convex-hypersurface}.
\end{proof}

\begin{proof}[Proof of Corollary \ref{cor-low-dim}]
	Inequality \eqref{s1:1.3} with $k=1$ is equivalent to
	\begin{align}\label{s3:key-estimate}
	\int_{\Sigma}H_2\leq \frac{n-1}{2n}\frac{(\int_{\Sigma} H)^2}{|\Sigma|}.
	\end{align}
	Then substituting the above inequality \eqref{s3:key-estimate} into \eqref{s2:n=2-case} and \eqref{s2:n=3-case}, we obtain the desired inequalities \eqref{1.5} and \eqref{1.7} respectively. The equality characterization follows from the equality case of \eqref{s3:key-estimate} in Corollary \ref{cor-convex-hypersurface}. This completes the proof of Corollary \ref{cor-low-dim}.
\end{proof}

Next, we complete the proof of Theorem \ref{thm-II}.
\begin{proof}[Proof of Theorem \ref{thm-II}]
	The inequality \eqref{s1:1.1} in Theorem \ref{thm-II} has been proved. In view of \eqref{s3:Reilly-formula}, the equality in \eqref{s3:key-inequality} implies that
	\begin{align}\label{s3:Equality-I-Va}
	(\D V_a g-\nabla^2 V_a+V_a \operatorname{Ric}^{\Sigma})\(\nabla f-\frac{\nabla V_a}{V_a}f,\nabla f-\frac{\nabla V_a}{V_a}f\)=0.
	\end{align}
	On the other hand, the equality in \eqref{s3:key-step} implies that there exists a constant $\b\in\mathbb R$ such that 
	\begin{align}\label{s3:Equality-II-Va}
	(\mathring{T}_k)_{ij}=\b \mathring{A}_{ij}=\b \(f_{ij}-\frac{(V_a)_{ij}}{V_a}f-\frac{1}{n}\(\D f-\frac{\D V_a}{V_a}f\)g_{ij}\).
	\end{align}
	By assumption that $\D V_a g-\nabla^2 V_a+V_a \operatorname{Ric}^{\Sigma}>0$ on $\Sigma$, by \eqref{s3:Equality-I-Va} we have $f=\a V_a$ for some $\a\in \mathbb R$ on $\Sigma$ and hence $\-H_k^{V_a}=H_k$ on $\Sigma$ in view of Eq. \eqref{Neumann-eq}. The remaining proof is almost the same as that in the proof of Theorem \ref{thm-I}. The proof of Theorem \ref{thm-II} is completed. 
\end{proof}

\begin{rem} The sub-static condition of $(\Sigma,g,V_a)$ can be expressed in terms of certain convexity of the hypersurface. Since $\-\nabla^2 V_a=-K V_a \-g$, it follows from the Gauss equations and Gauss formula that
	\begin{align*}
	\D V_a=&\-\D V_a-\-\nabla^2 V_{a}(\nu,\nu)-H (V_a)_{\nu}=-nKV_a- H (V_a)_{\nu},\\
	(V_a)_{ij} =&(V_a)_{,ij}-(V_a)_{\nu} h_{ij}=-K V_a g_{ij}-(V_a)_{\nu} h_{ij}, \\
		\Ric^{\Sigma}_{ij}=&Hh_{ij}-h_{ik}h^k_j+(n-1)K g_{ij},
		\end{align*} 
	where we use $(V_a)_{ij}=\nabla_i\nabla_j V_a$ and $(V_a)_{,ij}=\-\nabla_i\-\nabla_j V_a$. Then we have
		\begin{align}\label{s4:sub-static-expression}
		\D V_a g_{ij}-(V_a)_{ij} +V_a \Ric^{\Sigma}_{ij}=(V_a h_{ik}-(V_a)_{\nu} g_{ik})(H g_{kj}-h_{kj}).
		\end{align}
		So if $\Sigma$ is convex and $h_{ij}\geq \frac{(V_a)_{\nu}}{V_a}g_{ij}$, then $(\Sigma,g,V_a)$ is sub-static.
\end{rem} 

\begin{proof}[Proof of Corollary \ref{cor-weighted-free-boundary-Euclidean}]
	Since $\Sigma$ is a strictly convex free boundary hypersurface in a Euclidean ball, there exists a constant unit vector $a\in \mathbb R^{n+1}$ such that $V_a=\langle x,a\rangle>0$ and $(V_a)_{\nu}=\langle a,\nu\rangle<0$, see \cite[Lems. 11 \& 12]{Lambert-Scheuer2016}. Then $(\Sigma,g,V_a)$ is strictly sub-static in view of \eqref{s4:sub-static-expression}. Applying Theorem \ref{thm-II}, we complete the proof of Corollary \ref{cor-weighted-free-boundary-Euclidean}.
\end{proof}

\begin{bibdiv}
\begin{biblist}
%\bibliographystyle{amsplain}
%-------------------------------------------------

\bib{AFM2019}{article}{
	author={Agostiniani, V.},
	author={Fogagnolo, M.},
	author={Mazzieri, L.},
	title={Minkowski inequalities via nonlinear potential theory},
	eprint={arXiv:1906.00322v4},
}

\bib{ACW2021}{article}{
	author={Andrews, B.},
	author={Chen, X.},
	author={Wei, Y.},
	title={Volume preserving flow and Alexandrov-Fenchel type inequalities in hyperbolic space},
	journal={J. Eur. Math. Soc. (JEMS)},
	volume={23},
	year={2021},
    pages={2467--2509},
}

\bib{AHL2020}{article}{
	author={Andrews, B.},
	author={Hu, Y.},
	author={Li, H.},
	title={Harmonic mean curvature flow and geometric inequalities},
	journal={Adv. Math.},
	year={2020},
	volume={375},
	pages={107393},
}

\bib{BHW2016}{article}{
	author={Brendle, S.},
	author={Hung, P.-K.},
	author={Wang, M.-T.},
	title={A Minkowski inequality for hypersurfaces in the anti-de Sitter-Schwarzschild manifold},
	journal={Comm. Pure Appl. Math.},
	year={2016},
	volume={69},
	number={1},
	pages={124--144},
}

\bib{Cheng-Zhou2014}{article}{
   author={Cheng, X.},
   author={Zhou, D.}, 
   title={Rigidity for closed totally umbilical hypersurfaces in space forms}, 
   journal={J. Geom. Anal.},
   volume={24},
   year={2014},
   number={3},
   pages={1337–-1345},
}

\bib{DeLellis-Muller2005}{article}{
	author={De Lellis, C.},
	author={M\"uller, S.}, 
	title={Optimal rigidity estimates for nearly umbilical surfaces}, 
	journal={J. Differential Geom.},
	volume={69},
	pages={75--110},
}

\bib{DeLellis-Topping2010}{article}{
	author={De Lellis, C.},
	author={Topping, P.}, 
	title={Almost-Schur lemma}, 
	journal={Calc. Var.},
	year={2012},
	volume={43},
	pages={347--354},
}

\bib{Fraser-Schoen2011}{article}{
	author={Fraser, A.},
	author={Schoen, R.}, 
	title={The first Steklov eigenvalue, conformal geometry, and minimal surfaces}, 
	journal={Adv. Math.},
	volume={226},
	year={2011},
	number={5},
	pages={4011--4030},
}

\bib{Fraser-Schoen2016}{article}{
	author={Fraser, A.},
	author={Schoen, R.}, 
	title={Sharp eigenvalue bounds and minimal surfaces in the ball}, 
	journal={Invent. Math.},
	volume={203},
	year={2016},
	number={3}, 
	pages={823--890},
}

\bib{Ge-Wang-Wu2014}{article}{
   author={Ge, Y.},
   author={Wang, G.},
   author={Wu, J.}, 
   title={Hyperbolic Alexandrov-Fenchel quermassintegral inequalities II}, 
   journal={J. Differential Geom.},
   volume={98},
   year={2014}, 
   pages={237--260},
}

\bib{Guan-Li2009}{article}{
	author={Guan, P.},
	author={Li, J.}, 
	title={The quermassintegral inequalities for k-convex starshaped domains}, 
	journal={Adv. Math.},
	volume={221},
	year={2009},
	number={5},
	pages={1725-1732},
}   

\bib{Hardy-1934}{book}{
	author={Hardy, G.H.},
	author={Littlewood, J.E.},
	author={Polya, G.}, 
	title={Inequalities}, 
	publisher={Cambridge Univ. Press},
	note={Cambridge},
	year={1934},
}

\bib{Hu-Li2019}{article}{
	author={Hu, Y.},
	author={Li, H.},
	title={Geometric inequalities for hypersurfaces with nonnegative sectional curvature in hyperbolic space}, 
	journal={Calc. Var. Partial Differential Equations},
	volume={58},
	year={2019},
	number={2},
	pages={55},
}

\bib{Hu-Li2021}{article}{
	author={Hu, Y.},
	author={Li, H.},
	title={Geometric inequalities for static convex domains in hyperbolic space}, 
	eprint={arXiv:2105.03911}
}

\bib{Hu-Li-Wei2020}{article}{
	author={Hu, Y.},
	author={Li, H.},
	author={Wei, Y.},
	title={Locally constrained curvature flows and geometric inequalities in hyperbolic space},
	journal={Math. Ann.},
	year={2020},
	eprint={https://doi.org/10.1007/s00208-020-02076-4}
}

\bib{Kwong2015}{article}{
	author={Kwong, K.-K.}, 
	title={On an inequality of Andrews, De Lellis, and Topping}, 
	journal={J. Geom. Anal.},
	volume={25},
	year={2015}, 
	pages={108--121},
}

\bib{Lambert-Scheuer2016}{article}{
	author={Lambert, B.},
	author={Scheuer, J}, 
	title={The inverse mean curvature flow perpendicular to the sphere}, 
	journal={Math. Ann.},
	volume={364},
	year={2016}, 
	number={3},
	pages={1069--1093},
}

\bib{Lambert-Scheuer2017}{article}{
	author={Lambert, B.},
	author={Scheuer, J}, 
	title={A geometric inequality for convex free boundary hypersurfaces in the unit ball}, 
	journal={Proc. Amer. Math. Soc.},
	volume={145},
	year={2017}, 
	number={9},
	pages={4009--4020},
}

\bib{Li-Wei-Xiong2014}{article}{
	author={Li, H.},
	author={Wei, Y.},
	author={Xiong, C.},
	title={A geometric inequality on hypersurface in hyperbolic space}, 
	journal={Adv. Math.},
	volume={253},
	year={2014}, 
	pages={152--162},
}

\bib{Li-Xiong2018}{article}{
	author={Li, H.},
	author={Xiong, C.},
	title={Stability of capillary hypersurfaces in a Euclidean ball}, 
	journal={Pacific J. Math.},
	volume={297},
	year={2018}, 
	pages={131--146},
}

\bib{LX19}{article}{
	author={Li, J.},
	author={Xia, C.},
	title={An integral formula and its applications on sub-static manifolds},
	journal={J. Differential Geom.},
	volume={113},
	year={2019},
	pages={493--518},
}

\bib{Perez2011}{article}{
    author={Perez, D.}, 
    title={On nearly umbilical hypersurfaces}, 
    note={Thesis}, 
    year={2011},
}

\bib{Qiu-Xia2014}{article}{
	author={Qiu, G.},
	author={Xia, C.}, 
	title={A generalization of Reilly's formula and its applications}, 
	journal={Int. Math. Res. Not.},
	volume={17},
	year={2014},
	pages={rnu184},
}

\bib{Reilly1973}{article}{
	author={Reilly, R.C.}, 
	title={Variational properties of functions of the mean curvatures for hypersurfaces in space forms}, 
	journal={J. Differential Geom.},
	volume={8},
	year={1973}, 
	pages={465--477},
}

\bib{Reilly1977}{article}{
	author={Reilly, R.C.}, 
	title={Applications of the Hessian operator in a Riemannian manifold}, 
	journal={Indiana Univ. Math. J.},
	volume={26},
	year={1977}, 
	pages={459--472},
}

\bib{Ros1988}{article}{
	author={Ros, A.},
	title={Compact hypersurfaces with constant scalar curvature and a congruence theorem},
	note={With an appendix by Ncholas J. Korevaar},
	journal={J. Differential Geom.},
	year={1988},
	volume={27},
	number={2},
	pages={215--223},
}

\bib{Ros-Souam1997}{article}{
	author={Ros, A.},
	author={Souam, R.},
	title={On stability of capillary surfaces in a ball},
	journal={Pacific J. Math.},
	year={1997},
	volume={178},
	number={2},
	pages={345--361},
}

\bib{Scheuer-Wang-Xia2018}{article}{
	author={Scheuer, J.},
	author={Wang, G.},
	author={Xia, C.},
	title={Alexandrov-Fenchel inequalities for convex hypersurfaces with free boundary in a ball}, 
	eprint={arXiv:1811.05776v1},
	journal={to appear in J. Differential Geom.},
}

\bib{Scheuer-Xia2019}{article}{
	author={Scheuer, J.},
	author={Xia, C.},
	title={Locally constrained inverse curvature flows},
	journal={Trans. Amer. Math. Soc.},
	volume={372},
	number={10},
	year={2019},
	pages={6771--6803},
}

\bib{Volkmann2016}{article}{
	author={Volkmann, A}, 
	title={A monotonicity formula for free boundary surfaces with respect to the unit ball},
	journal={Comm. Anal. Geom.},
	volume={24},
	year={2016},
	number={1}, 
	pages={195--221},
}

\bib{Wang-Weng2020}{article}{
	author={Wang, G.},
	author={Weng, L.},
	title={A mean curvature type flow with capillary boundary in a unit ball},
	journal={Calc. Var. Partial Differential Equations},
	volume={59},
	pages = {Art. 149},
	year = {2020},
}

\bib{Wang-Xia2014}{article}{
	author={Wang, G.},
	author={Xia, C.}, 
	title={Isoperimetric type problems and Alexandrov-Fenchel type inequalities in the hyperbolic space}, 
	journal={Adv. Math.},
	volume={259},
	year={2014}, 
	pages={532--556},
}

\bib{WX19}{article}{
    author={Wang, G.},
    author={Xia, C.}, 
    title={Uniqueness of stable capillary hypersurfaces in a ball},
    journal={Math. Ann.},
    volume={374},
    year={2019},
    pages={1845--1882},
}

\bib{Wang-Xia2019}{article}{
	author={Wang, G.},
	author={Xia, C.},
	title={Guan-Li type mean curvature flow for free boundary hypersurfaces in a ball},
	year={2019},
	journal={to appear in Comm. Anal. Geom.},
}

\bib{Wei-Xiong2015}{article}{
	author={Wei, Y.},
	author={Xiong, C.},
	title={Inequalities of Alexandrov-Fenchel type for convex hypersurfaces in hyperbolic space and in the sphere},
	journal={Pacific J. Math.}, 
	volume={277},
	year={2015},
	pages={219--239},
}

\bib{Xia2016}{article}{
	author={Xia, C.},
	title={A Minkowski type inequality in space forms},
	journal={Calc. Var. Partial Differential Equations},
	volume={55},
	pages = {Art. 96},
	year = {2016},
}

\end{biblist}
\end{bibdiv}
\end{document}